\newcommand{\CRig}{\mathbf{CRig}}
\newcommand{\CRigr}{\mathbf{CRig}_{r}}
\newcommand{\SpecSp}{\mathbf{Spec}}
\newcommand{\CohFrm}{\mathbf{CohFrm}}
\newcommand{\Frm}{\mathbf{Frm}}
\newcommand{\Loc}{\mathbf{Loc}}
\newcommand{\AlgLat}{\mathbf{AlgLat}}
\newcommand{\DLat}{\mathbf{DLat}_{0,1}}
\newcommand{\DLatFrm}{\mathbf{DLat}_{\mathrm{frm}}}
\newcommand{\QuantTop}{\mathbf{CQuant}_{\mathrm{int}}}
\newcommand{\CIdRig}{\mathbf{CIdRig}}
\newcommand{\Supp}{\mathbf{Supp}}
\newcommand{\FSupp}{\mathbf{FSupp}}
\newcommand{\OSupp}{\mathbf{OSupp}}
\newcommand{\TotSupp}{\mathbf{Supp}_{\mathrm{tot}}}
\newcommand{\Sh}{\mathsf{Sh}}
\newcommand{\Spec}{\mathsf{Spec}}
\newcommand{\Speck}{\mathsf{Spec}_{k}}
\newcommand{\Specl}{\mathsf{Spec}_{\mathfrak l}}
\newcommand{\RId}{\mathsf{RId}}
\newcommand{\RIdk}{\mathsf{RId}_{k}}
\newcommand{\Id}{\mathsf{Id}}
\newcommand{\Idk}{\mathsf{Id}_{k}}
\newcommand{\KOpen}{\mathfrak K_{0}}
\newcommand{\Coz}{\operatorname{Coz}}
\newcommand{\Zar}{\operatorname{Zar}}
\newcommand{\Max}{\operatorname{Max}}
\newcommand{\Maxk}{\operatorname{Max}_{k}}
\newcommand{\op}{\mathrm{op}}
\newtheorem{theorem}{Theorem}[section]
\newtheorem{proposition}[theorem]{Proposition}
\newtheorem{lemma}[theorem]{Lemma}
\newtheorem{corollary}[theorem]{Corollary}
\theoremstyle{definition}
\newtheorem{definition}{Definition}[section]
\newtheorem{example}[theorem]{Example}
\numberwithin{equation}{section}
\theoremstyle{remark}
\newtheorem{remark}[theorem]{Remark}
\def\leq{\leqslant}
\begin{document}
\raggedbottom

\title[Radical-ideal functors and quantale completion]{Radical-Ideal Functors, a Support Bifibration, and Quantale Completion for Commutative Semirings}

\author{Pronay Biswas}
\thanks{Corresponding author: Pronay Biswas, \href{mailto:pronayb.math.rs@jadavpuruniversity.in}{pronayb.math.rs@jadavpuruniversity.in}.}
\address{Department of Mathematics, Jadavpur University, 188 Raja Subodh Chandra Mallick Road, Jadavpur, Kolkata, West Bengal 700032, India}
\email{pronayb.math.rs@jadavpuruniversity.in, pronaybiswas1729@gmail.com}
\urladdr{ORCID: \url{https://orcid.org/0009-0008-2710-1748}}

\author{Amartya Goswami}
\address{Department of Mathematics and Applied Mathematics, University of Johannesburg, P.O. Box 524, Auckland Park 2006, South Africa; and National Institute for Theoretical and Computational Sciences (NITheCS), South Africa}
\email{agoswami@uj.ac.za}
\urladdr{ORCID: \url{https://orcid.org/0000-0003-4829-0847}}

\author{Sujit Kumar Sardar}
\address{Department of Mathematics, Jadavpur University, 188 Raja Subodh Chandra Mallick Road, Jadavpur, Kolkata, West Bengal 700032, India}
\email{sujitk.sardar@jadavpuruniversity.in, sksardarjumath@gmail.com}
\urladdr{ORCID: \url{https://orcid.org/0000-0001-7837-0835}}

\subjclass[2020]{Primary 18A40, 16Y60; Secondary 18C15, 06D22, 06F07, 54B35}
\keywords{commutative semiring; radical ideal; coherent frame; spectral space; Grothendieck bifibration; quantale monad}

\begin{abstract}
We organize ordinary, subtractive ($k$-), and strong ideal theory of commutative semirings into a functorial framework. Radical extension is left adjoint to contraction and yields coherent-frame-valued functors naturally represented by the open-set frames of the corresponding prime spectra. The comparison from ordinary to $k$-radical ideals is a natural nucleus whose components are surjective and, under coherent Stone duality, correspond to dense sublocale embeddings. Ordinary, $k$-, and strong prime spectra form nested natural spectral functors, while universal support objects recover the spectra, radical frames, and complemented idempotents. Finite supports assemble into a Grothendieck bifibration with a canonical bicartesian section. For complete idealic semirings, $k$-ideal completion realizes a subtractive form of ideal quantale completion. We compute the induced monad, identify its restriction to frames with the classical ideal-lattice monad, and prove that its Eilenberg--Moore category is equivalent to the category of integral commutative quantales. Applications include a Stone-spectrum criterion for positive cones of $f$-rings and density criteria for $k$-prime spectra of $r$-semirings.
\end{abstract}

\maketitle

\tableofcontents

\section{Introduction}

A single commutative semiring supports several inequivalent ideal doctrines. Ordinary ideals, subtractive ideals (usually called $k$-ideals), and strong ideals have different radical reflections and different prime spectra. The categorical question is therefore not only how to construct these objects, but how they vary with homomorphisms, how the closure doctrines compare naturally, and which universal constructions represent them. Our point of departure is the radical-ideal functor of Banaschewski and its frame-theoretic developments \cite{Banaschewski,Dube18,Martinez}, together with coherent Stone duality \cite{Hochster,Johnstone,Picado,SpectralBook} and the ideal and spectral theory of commutative semirings \cite{Golan,Lescot,AG24,Ray22}.

For a homomorphism of semirings, the set-theoretic image of an ideal need not be an ideal of the required kind. Radical extension, rather than direct image, supplies the covariant operation; it is characterized as the left adjoint of contraction. This observation produces coherent-frame-valued radical-ideal functors and places ordinary and $k$-prime spectra on the dual side of coherent Stone duality. The passage from ordinary radical ideals to $k$-radical ideals then becomes a natural nucleus, not merely an objectwise closure operation.

Support theory supplies a complementary universal language. Reticulation and support constructions \cite{Simmons,JoyalSupport,Johnstone} represent algebraic information in bounded distributive lattices, frames, and spectral spaces. Related categorical support formalisms occur in tensor triangular geometry, point-free reconstruction, and universal algebra \cite{Balmer07,Kock,Georgescu21,Krause24}. We use support objects both to reconstruct the spectra and radical frames and to organize change of scalars: finite supports form the fibers of a Grothendieck bifibration over the semiring category, with the canonical compact-open support giving a bicartesian section.

Ideal completion as a free quantale construction for idempotent or ordered semirings is established in \cite{NishizawaFurusawa14,Fujii23}. We study its subtractive incarnation for complete idealic semirings: the free object is formed from $k$-ideals with their multiplicative $k$-ideal product. The point is not to claim the underlying ideal-completion adjunction de novo, but to compute the induced monad in this setting, identify its Eilenberg--Moore algebras, and relate it to the radical and spectral functors developed in the earlier sections. On the full subcategory of frame objects, the resulting monad agrees canonically with the classical ideal-lattice monad on bounded distributive lattices; that monad and its iterates are studied in \cite{Johnstone,Razafindrakoto25}. Outside the frame subcategory, the two constructions have different bases and retain different multiplicative information.

Several objectwise ingredients used below are classical or adapt standard arguments from semiring ideal theory, frame theory, and coherent duality. The main contribution of the paper is concentrated in the categorical structures that connect these ingredients. 

First, we construct the coherent-frame-valued $k$-radical functor and compare it with the ordinary radical-ideal functor by a natural nucleus; under coherent Stone duality this comparison is identified with restriction of opens along the natural inclusion of the $k$-prime spectrum. In the same framework, the ordinary, $k$-, and strong prime spectra are assembled into nested natural spectral functors, together with closed embeddings of their constructible refinements. 

Second, we globalize finite support theory: the support categories form the fibers of a Grothendieck bifibration over the semiring category, and the canonical compact-open support defines a bicartesian section and a quasi-inverse to the projection. 

Third, we compute the monad induced by subtractive $k$-ideal completion, identify its exact restriction to frame objects with the classical ideal-lattice monad, and characterize its Eilenberg--Moore algebras as integral commutative quantales. To the best of our knowledge, this combined natural comparison, support bifibration, and monadic identification does not occur in the published literature. The localic and sheaf-theoretic statements are recorded as formal categorical consequences of the comparison maps, while the Stone-space and density criteria are applications of the structural theory.

Our main results are as follows.
\begin{enumerate}[label=\upshape(\roman*)]
\item Extending the classical ordinary radical-ideal construction, radical extension is left adjoint to contraction for ordinary and $k$-radical ideals. These adjunctions define coherent-frame-valued functors \(\RId\) and \(\RIdk\), naturally represented by
\[
\RId(S)\cong\Omega(\Spec(S)),
\qquad
\RIdk(S)\cong\Omega(\Speck(S)).
\]

\item The $k$-radical closure is a natural nucleus
\[
\mathfrak j\colon\RId\Longrightarrow\RIdk.
\]
Under the standing reduced and conical hypotheses, its components are surjective and correspond under coherent Stone duality to dense sublocale embeddings. Ordinary, $k$-, and strong prime spectra form nested natural spectral functors, and their constructible refinements are closed embeddings.

\item Universal support objects recover the prime spectrum, the radical-ideal frame, and the Boolean algebra of complemented idempotents. Finite support categories assemble into a Grothendieck bifibration
\[
\pi\colon\TotSupp\longrightarrow\CRig.
\]
The canonical compact-open support is a bicartesian section and a quasi-inverse to \(\pi\).

\item In the complete idealic setting, the $k$-ideal construction realizes the subtractive form of ideal quantale completion. We compute the induced monad, identify its exact restriction to frame objects with the classical ideal-lattice monad, and prove that its Eilenberg--Moore algebras are precisely the complete idealic semirings whose multiplication distributes over arbitrary joins.

\item The structural results yield a Stone-spectrum criterion for positive cones of $f$-rings and density criteria for the natural inclusion of the $k$-prime spectrum of an $r$-semiring.
\end{enumerate}

Figure~\ref{fig:categorical-framework} summarizes the principal categorical constructions and their relations. The upper adjunction is Theorem~\ref{thm:k-ideal-quantale-adjunction}, its comparison with the ideal-lattice monad is Proposition~\ref{prop:monad-frame-comparison}, its Eilenberg--Moore comparison is Theorem~\ref{thm:EM-quantales}, and the two duality equivalences are recalled in Remark~\ref{rem:stone-ideal-completion}. The functors in the top diagram are established in Proposition~\ref{prop:spectrum-functors}, Theorem~\ref{thm:radical-functor}, Proposition~\ref{prop:k-radical-functor}, Definition~\ref{def:canonical-support-functor}, and Proposition~\ref{prop:strong-spectrum-functor}. The nested spectral transformations and their constructible refinements are Propositions~\ref{prop:nested-spectra} and~\ref{prop:constructible-spectra}. The middle square combines Propositions~\ref{prop:natural-spectral-representations} and~\ref{prop:natural-k-radical-comparison}; its vertical arrows are natural isomorphisms, and each component \(\mathfrak j_S\) is surjective and dual to a dense sublocale embedding. The reconstruction isomorphisms follow from Proposition~\ref{prop:natural-spectral-representations} and Remark~\ref{rem:natural-support-reconstruction}. The universal arrows are Proposition~\ref{prop:universal-supports}, while the global bifibration, its canonical bicartesian section, and the resulting equivalence are Theorem~\ref{thm:support-bifibration}. Finally, \(\CRigr\) is the full subcategory introduced after Definition~\ref{def:r-semiring}, whereas Proposition~\ref{prop:r-semiring-density} is deliberately stated in the larger category of commutative unital semirings.

The paper is organized as follows. Section~\ref{sec:preliminaries} fixes the categorical, ideal-theoretic, and spectral conventions. Section~\ref{sec:radical-functors} constructs the radical-ideal functors and their natural comparison. Section~\ref{sec:support-functors} develops support representations, compares the three spectra, proves the $r$-semiring density and Stone-space criteria, and constructs the support bifibration. Section~\ref{sec:quantale-completion} treats complete idealic semirings, the $k$-ideal quantale adjunction, the induced monad, and its Eilenberg--Moore category.

\medskip
\noindent\textbf{Acknowledgements.}
The first author is grateful to Themba Dube and Partha Pratim Ghosh for fruitful discussions and thanks the University Grants Commission (India) for a Senior Research Fellowship (ID: 211610013222/Joint CSIR--UGC NET June 2021).

\clearpage
\begin{figure}[!t]
\begin{minipage}{\textwidth}
The following figure separates the one-categorical backbone of the paper from its natural comparison and universal support statements. Here \(\mathbb T=\mathcal U\Idk\) is the monad induced by the quantale adjunction. The symbol \(\simeq\) denotes an equivalence of categories, \(\dashv\) denotes an adjunction, and hooked arrows denote the displayed faithful embeddings; no fullness is implied unless explicitly stated. The faithful functor
\[
\mathcal Q_{\wedge}\colon\CohFrm\longrightarrow\QuantTop,
\qquad
F\longmapsto(F,\vee,\wedge,0,1),
\]
regards a coherent frame as an integral commutative quantale with multiplication given by meet. The three spectrum functors are displayed covariantly by taking values in \(\SpecSp^{\op}\).
\end{minipage}
\medskip
\centering
\resizebox{\textwidth}{!}{%
\begin{tikzcd}[ampersand replacement=\&, column sep=4.2em, row sep=5.0em]
\& \& \CIdRig
  \arrow[rr,bend left=18,"\Idk"]
  \arrow[rr,phantom,"\dashv" description]
\& \& \QuantTop
  \arrow[ll,bend left=18,"\mathcal U"]
  \arrow[r,"\simeq"]
\& \CIdRig^{\mathbb T} \\
\CRigr \arrow[r,hook] \&
\CRig
  \arrow[r,shift left=2.2ex,"\Spec"]
  \arrow[r,"\Speck" description]
  \arrow[r,shift right=2.2ex,"\Specl"']
  \arrow[rr,bend left=42,"\RId"]
  \arrow[rr,bend right=42,"\RIdk"']
  \arrow[rrr,bend right=58,"\mathfrak D"']
\&
\SpecSp^{\op}
  \arrow[r,shift left=1.4ex,"\Omega"]
  \arrow[r,phantom,"\simeq" description]
\&
\CohFrm
  \arrow[l,shift left=1.4ex,"\Sigma"]
  \arrow[r,shift left=1.4ex,"\mathfrak k(-)"]
  \arrow[r,phantom,"\simeq" description]
  \arrow[rr,hook,bend left=40]
  \arrow[ur,hook,"\mathcal Q_{\wedge}" description]
\&
\DLat
  \arrow[l,shift left=1.4ex,"\mathsf{Idl}"]
\&
\AlgLat
\end{tikzcd}}

\medskip
\begin{minipage}[t]{0.59\textwidth}
\centering
\begin{tikzcd}[column sep=5.1em, row sep=3.0em]
\RId
  \arrow[r,Rightarrow,"\mathfrak j"]
  \arrow[d,Rightarrow,"\delta"']
&
\RIdk
  \arrow[d,Rightarrow,"\delta^{k}"]
\\
\Omega\circ\Spec
  \arrow[r,Rightarrow,"\Omega(\iota^{\op})"']
&
\Omega\circ\Speck
\end{tikzcd}
\end{minipage}\hfill
\begin{minipage}[t]{0.37\textwidth}
\centering
\small
\[
\begin{gathered}
\Specl\xRightarrow{\lambda}\Speck\xRightarrow{\iota}\Spec,\\[-1pt]
\mathfrak D\xRightarrow{\ \sim\ }\mathfrak k(-)\circ\RId,\\[-1pt]
\mathsf{Idl}\circ\mathfrak D\xRightarrow{\ \sim\ }\RId,\\[-1pt]
\mathfrak D=\KOpen\circ\Spec.
\end{gathered}
\]
\end{minipage}

\smallskip
\resizebox{0.98\textwidth}{!}{$
\begin{array}{c@{\qquad\qquad}c@{\qquad\qquad}c}
(\RId^{\mathrm{fin}}(S),[-])\xrightarrow{\ \exists!\ }(L,\mathfrak E)
&
(\RId(S),[-])\xrightarrow{\ \exists!\ }(F,\mathfrak E)
&
(X,d)\xrightarrow{\ \exists!\ }(\Spec(S),D)
\\[-1pt]
(L,\mathfrak E)\in\Supp(S)
&
(F,\mathfrak E)\in\FSupp(S)
&
(X,d)\in\OSupp(S)
\\[-1pt]
\Supp(S)\simeq\mathbf 1
&
&
\end{array}
$}

\smallskip
\[
\begin{gathered}
\pi\colon\TotSupp\longrightarrow\CRig
\quad\text{is a Grothendieck bifibration},\\[-1pt]
\mathsf s\colon\CRig\longrightarrow\TotSupp
\quad\text{is a bicartesian section and a quasi-inverse to }\pi.
\end{gathered}
\]
\caption{The categorical framework. Top: the principal functors, equivalences, adjunction, monadic comparison, and faithful embeddings. Middle: the nested spectral functors, natural spectral representations, $k$-radical comparison, and support reconstructions. Bottom: the universal support arrows for a fixed semiring and the global support bifibration.}
\label{fig:categorical-framework}
\end{figure}
\clearpage

\section{Categorical and spectral preliminaries}\label{sec:preliminaries}

This section fixes the three kinds of data used throughout the paper. We first isolate the ordinary, $k$-, and strong ideal closure conditions; we then organize prime spectra contravariantly and prove the finite-cover criteria needed for compactness; finally, we recall the frame, locale, and quantale categories in which the later constructions take values.

Throughout Sections~\ref{sec:preliminaries}--\ref{sec:support-functors}, semirings are assumed to be commutative, unital, conical, and reduced, and all homomorphisms preserve both $0$ and $1$. We write \(\CRig\) for the resulting category. A semiring is \emph{conical} if \(x+y=0\) implies \(x=y=0\), and \emph{reduced} if its only nilpotent element is $0$. Positive cones of partially ordered rings provide standard examples of conical semirings; see \cite{Fuchs}. The hypotheses used in Section~\ref{sec:quantale-completion} are stated separately. We use ordinary right-to-left composition, and we use adjunctions between posets in their Galois-connection form. General categorical terminology follows \cite{MacLane}.

\subsection{Ideal closure operators}

We begin by separating the closure doctrines that replace subtraction in semiring ideal theory. The purpose of this subsection is to fix the ordinary and $k$-ideal products, the corresponding radical reflections, and the least-object properties that will later make extension maps into left adjoints.

For a semiring $S$, let \(\Id(S)\) denote its lattice of ideals, ordered by inclusion, and let \(U(S)\) be its group of units. We write \(\Max(S)\) for the maximal ideals of $S$.

An ideal $I$ is a \emph{$k$-ideal} if
\[
a+b\in I\ \text{ and }\ a\in I\quad\Longrightarrow\quad b\in I
\]
for all $a,b\in S$; this is the subtractivity condition of Henriksen \cite{Hen58}. An ideal is \emph{strong}, or an \(\mathfrak l\)-ideal, if \(a+b\in I\) implies \(a,b\in I\). We denote the poset of $k$-ideals by \(\Idk(S)\), and the maximal $k$-ideals by \(\Maxk(S)\).

The inclusion \(\Idk(S)\hookrightarrow\Id(S)\) is reflective. Its reflector is the $k$-closure operator
\[
\mathcal C_k(I)
 =\{a\in S\mid a+b\in I\text{ for some }b\in I\}.
\]
Equivalently, \(\mathcal C_k(I)\) is the least $k$-ideal containing $I$. Intersections of $k$-ideals are therefore $k$-ideals, whereas finite sums need not be: in \(\mathds{N}\), the sum \(2\mathds{N}+3\mathds{N}=\mathds{N}\setminus\{1\}\) is not a $k$-ideal. Strong ideals are closely related to order ideals in lattice-ordered algebra; see \cite{Smith}.

For ideals $I,J\subseteq S$, their \emph{ordinary ideal product} is
\[
I\cdot J=\langle xy\mid x\in I,\ y\in J\rangle.
\]
If $I$ and $J$ are $k$-ideals, their \emph{$k$-ideal product} is
\[
I\odot_k J=\mathcal C_k(I\cdot J).
\]
Thus \(I\odot_kJ\) is the least $k$-ideal containing all products $xy$ with $x\in I$ and $y\in J$; this is the product convention of \cite[Section~2]{AG24}. An ideal $P$ is \emph{semiprime} if \(A\cdot A\subseteq P\) implies \(A\subseteq P\) for every ideal $A$.

The ordinary radical of an ideal is
\[
\sqrt I=\{x\in S\mid x^n\in I\text{ for some }n\geq1\}.
\]
An ideal is \emph{radical} if \(I=\sqrt I\). We write
\[
[a]=\sqrt{\langle a\rangle},
\qquad
[a_1,\ldots,a_n]=\sqrt{\langle a_1,\ldots,a_n\rangle}.
\]
The radical operator is a closure operator on \(\Id(S)\), and its fixed points form the poset \(\RId(S)\).

A proper ideal is \emph{prime} if \(ab\in P\) implies \(a\in P\) or \(b\in P\). A \emph{$k$-prime ideal} is a prime ideal that is also a $k$-ideal. We write
\[
\Spec(S)=\{P\mid P\text{ is prime}\},
\qquad
\Speck(S)=\{P\mid P\text{ is $k$-prime}\}.
\]
For a subset $A\subseteq S$, set
\[
\mathcal V_S(A)=\{P\in\Spec(S)\mid A\subseteq P\},
\qquad
\mathcal V_{k,S}(A)=\{P\in\Speck(S)\mid A\subseteq P\}.
\]
We suppress the subscript $S$ when no ambiguity can arise. The ordinary radical has the following prime-separation description; we include the proof because it is used throughout the spectral and support constructions.

\begin{lemma}[Prime separation]\label{lem:ordinary-radical-prime-intersection}
For every ideal $I$ of a commutative unital semiring $S$,
\[
\sqrt I=\bigcap_{P\in\mathcal V(I)}P,
\]
where an empty intersection is interpreted as $S$.
\end{lemma}

\begin{proof}
Every prime ideal is radical: if $x^n\in P$ for some $n\geq1$, repeated use of primality gives $x\in P$. Hence \(\sqrt I\) is contained in every prime ideal containing $I$.

Conversely, let $x\notin\sqrt I$ and put
\[
M_x=\{1,x,x^2,\ldots\}.
\]
Then $I\cap M_x=\varnothing$. Let \(\Sigma\) be the set of ideals $J$ such that
\(I\subseteq J\) and \(J\cap M_x=\varnothing\), ordered by inclusion. The union of a chain in \(\Sigma\) is again an ideal in \(\Sigma\), so Zorn's lemma gives a maximal member $P$. It is proper and does not contain $x$.

We claim that $P$ is prime. Suppose that $ab\in P$ while $a,b\notin P$. The ideals
\(P+\langle a\rangle\) and \(P+\langle b\rangle\) properly contain $P$, so maximality yields integers $m,n\geq0$, elements $p,q\in P$, and coefficients $r,s\in S$ with
\[
x^m=p+ra,
\qquad
x^n=q+sb.
\]
Multiplying gives
\[
x^{m+n}=pq+psb+qra+rsab\in P,
\]
contrary to \(P\cap M_x=\varnothing\). Thus $P$ is a prime ideal containing $I$ but not $x$. Therefore every element outside \(\sqrt I\) is excluded by some prime ideal containing $I$, proving the reverse inclusion.
\end{proof}

In parallel with Lemma~\ref{lem:ordinary-radical-prime-intersection}, define
\[
\sqrt[k]{I}=\mathcal R_k(I)=\bigcap_{P\in\mathcal V_k(I)}P,
\]
again interpreting an empty intersection as $S$. The fixed points of \(\sqrt[k]{(-)}\) are the \emph{$k$-radical ideals}; their poset is denoted by \(\RIdk(S)\). This definition is made for every ideal; one need not assume in advance that the ideal is a $k$-ideal.

\begin{lemma}\label{lem:k-radical-k-closure}
For every ideal $I$ of $S$,
\[
\sqrt[k]{I}=\sqrt[k]{\mathcal C_k(I)}.
\]
Thus the $k$-radical reflection factors through the $k$-ideal reflection.
\end{lemma}

\begin{proof}
The ideals $I$ and \(\mathcal C_k(I)\) are contained in exactly the same $k$-prime ideals. One implication is immediate. Conversely, if a $k$-prime ideal $P$ contains $I$, then monotonicity of $k$-closure gives
\(\mathcal C_k(I)\subseteq\mathcal C_k(P)=P\); compare \cite[Lemma~2.1(5)]{AG24}. Intersecting the common family of containing $k$-prime ideals gives the equality.
\end{proof}

A $k$-ideal $P$ is \emph{$k$-semiprime} if
\[
I\odot_k I\subseteq P\quad\Longrightarrow\quad I\subseteq P
\]
for every $k$-ideal $I$. We shall use the facts that a $k$-ideal is $k$-semiprime precisely when it is semiprime, and precisely when it is $k$-radical; moreover, the $k$-radical of a $k$-ideal is the least $k$-semiprime ideal containing it \cite[Proposition~3.8, Theorem~3.14, Corollary~3.15]{AG24}. For an arbitrary ideal $I$, Lemma~\ref{lem:k-radical-k-closure} gives
\[
\sqrt[k]{I}=\sqrt[k]{\mathcal C_k(I)},
\]
and every $k$-semiprime ideal containing $I$ also contains \(\mathcal C_k(I)\); hence the same least-object statement applies to arbitrary ideals after $k$-closure. In particular, conicality makes $(0)$ a $k$-ideal, while reducedness makes it semiprime; hence $(0)$ is a $k$-radical ideal.

\subsection{Prime spectra as contravariant functors}

We next topologize the ordinary and $k$-prime spectra and record their behavior under inverse image. Besides establishing contravariance, we prove finite-cover criteria for basic opens; these criteria are the compactness input for the coherent-frame and support constructions below.

A topological space is \emph{spectral} if it is quasi-compact and $T_0$, has a basis of quasi-compact open sets closed under finite intersections, and is sober \cite{Hochster,SpectralBook}. Let \(\KOpen(X)\) denote the bounded distributive lattice of quasi-compact open subsets of a spectral space $X$. We write \(\SpecSp\) for the category of spectral spaces and spectral maps.

The Zariski topology on \(\Spec(S)\) has basic opens
\[
D_S(a)=\{P\in\Spec(S)\mid a\notin P\}
\]
and basic closed sets \(\mathcal V_S(a)=\{P\mid a\in P\}\). The subspace \(\Speck(S)\) has basic opens
\[
D_{k,S}(a)=D_S(a)\cap\Speck(S).
\]
Both spectra are spectral spaces, and the displayed basic opens form bases of quasi-compact open sets closed under finite intersections; see \cite[Proposition~3.6]{Ray22}. We suppress the subscript $S$ when no ambiguity is possible.

\begin{proposition}\label{prop:spectrum-functors}
Inverse image of ideals defines contravariant functors
\[
\Spec,\Speck\colon\CRig^{\op}\longrightarrow\SpecSp.
\]
Equivalently, they may be regarded as covariant functors \(\CRig\to\SpecSp^{\op}\). For a morphism \(f\colon S\to T\), both functors act by
\[
f^*\colon P\longmapsto f^{-1}(P).
\]
\end{proposition}

\begin{proof}
The inverse image of a prime ideal under a unital semiring homomorphism is prime. If $P$ is a $k$-ideal and \(f(a+b)\in P\) with \(f(a)\in P\), then \(f(b)\in P\), so \(f^{-1}(P)\) is a $k$-ideal. Moreover,
\[
(f^*)^{-1}(D_S(a))=D_T(f(a)),
\qquad
(f^*)^{-1}(D_{k,S}(a))=D_{k,T}(f(a)).
\]
Thus the induced maps are spectral. The identity and composition laws are those of inverse image.
\end{proof}

The inclusions \(\iota_S\colon\Speck(S)\hookrightarrow\Spec(S)\) are the components of a natural transformation \(\iota\colon\Speck\Longrightarrow\Spec\) between the contravariant functors \(\CRig^{\op}\to\SpecSp\). If the same assignments are viewed covariantly as functors \(\CRig\to\SpecSp^{\op}\), the corresponding transformation has the reversed direction. Applying the open-set functor then produces the covariant comparison of radical-ideal frames constructed in Section~\ref{sec:radical-functors}.

\begin{theorem}\label{thm:basic-open-cover}
For \(a\in S\) and \(A\subseteq S\), the following conditions are equivalent:
\begin{enumerate}
\item \(D(a)\subseteq\bigcup_{b\in A}D(b)\);
\item \(\mathcal V(A)=\bigcap_{b\in A}\mathcal V(b)\subseteq\mathcal V(a)\);
\item \(a\in\sqrt{\langle A\rangle}\);
\item there exists a finite subset \(A_0\subseteq A\) such that \(a\in\sqrt{\langle A_0\rangle}\).
\end{enumerate}
\end{theorem}

\begin{proof}
The first two conditions are equivalent by taking complements. The equivalence of (2) and (3) is the prime-ideal characterization of the radical. Finally, every element of \(\langle A\rangle\) belongs to the ideal generated by a finite subset of $A$, which proves the equivalence of (3) and (4).
\end{proof}

\begin{corollary}\label{cor:k-basic-open-cover}
For \(a\in S\) and \(A\subseteq S\), the following conditions are equivalent:
\begin{enumerate}
\item \(D_k(a)\subseteq\bigcup_{b\in A}D_k(b)\);
\item \(\mathcal V_k(A)\subseteq\mathcal V_k(a)\);
\item \(a\in\sqrt[k]{\langle A\rangle}\);
\item there exists a finite subset \(A_0\subseteq A\) such that
\(a\in\sqrt[k]{\langle A_0\rangle}\).
\end{enumerate}
\end{corollary}

\begin{proof}
The first two conditions are equivalent by taking complements in \(\Speck(S)\), and the equivalence of (2) and (3) is immediate from the definition of the $k$-radical as an intersection of containing $k$-prime ideals. If these conditions hold, the quasi-compact basic open \(D_k(a)\) is covered by the family \(\{D_k(b)\mid b\in A\}\); hence a finite subfamily already covers it. Applying the first three equivalences to that finite subfamily yields (4). The implication (4)$\Rightarrow$(3) follows from monotonicity of $k$-radicalization.
\end{proof}

\subsection{Frames, coherent duality, and quantales}

This subsection records the target categories and dualities used in the rest of the paper. We recall coherent frames and their compact parts, the locale interpretation of nuclei, and the relation between spectral spaces, bounded distributive lattices, and quantales.

A \emph{quantale} is a complete lattice \((Q,\vee)\) equipped with an associative multiplication $*$ that distributes over arbitrary joins in each variable. It is \emph{commutative} when $*$ is commutative and \emph{unital} when it has a unit. Quantale homomorphisms preserve arbitrary joins and multiplication, and in the unital case also the unit; see \cite{Ros90}.

A \emph{frame} is a complete lattice in which finite meets distribute over arbitrary joins. Frame homomorphisms preserve arbitrary joins and finite meets. Every frame homomorphism \(h\colon L\to M\) has a right adjoint
\[
h_*(y)=\bigvee\{x\in L\mid h(x)\leq y\}.
\]
We write \(\Frm\) for the category of frames and frame homomorphisms and \(\Loc=\Frm^{\op}\) for the category of locales; see \cite{Johnstone,Picado}.

An element $c$ of a complete lattice is \emph{compact} if
\(c\leq\bigvee X\) implies \(c\leq\bigvee X_0\) for some finite \(X_0\subseteq X\). A frame is \emph{algebraic} if every element is a join of compact elements below it. It is \emph{coherent} if it is algebraic, its top is compact, and its compact elements are closed under finite meets. An element $c$ of a frame is \emph{complemented} if there exists $d$ with \(c\vee d=1\) and \(c\wedge d=0\). A frame is \emph{zero-dimensional} if its complemented elements are join-dense, equivalently, if every element is the join of the complemented elements below it. A \emph{coherent frame homomorphism} preserves compact elements. The resulting category is \(\CohFrm\). We write \(\AlgLat\) for algebraic lattices and Scott-continuous maps, and \(\DLat\) for bounded distributive lattices and bounded lattice homomorphisms.

A map \(j\colon L\to L\) on a frame is a \emph{nucleus} if it is extensive, idempotent, and preserves finite meets. Its fixed points form a frame \(L_j\), and the corestriction \(j\colon L\to L_j\) is a frame homomorphism. The corresponding sublocale is dense exactly when \(j(0)=0\).

We use the standard dual equivalence
\[
\SpecSp^{\op}\simeq\CohFrm,
\qquad
X\longmapsto\Omega(X),
\qquad
F\longmapsto\Sigma(F),
\]
and the equivalence
\[
\CohFrm\simeq\DLat,
\qquad
F\longmapsto\mathfrak k(F),
\qquad
L\longmapsto\mathsf{Idl}(L),
\]
where \(\mathfrak k(F)\) is the lattice of compact elements and \(\mathsf{Idl}(L)\) is the frame of lattice ideals; see \cite[Chapter~II]{Johnstone} and \cite[Chapter~II]{Picado}.

For later use, a frame homomorphism \(h\colon L\to M\) is called \emph{onto} if \(hh_*=1_M\), \emph{dense} if \(h(a)=0\) implies \(a=0\), and \emph{codense} if \(h(a)=1\) implies \(a=1\). A frame $L$ is \emph{joinfit} if for every \(a>0\) there exists \(b<1\) with \(a\vee b=1\).

\section{The radical-ideal functors and their comparison}\label{sec:radical-functors}

The constructions in this section are first defined objectwise and then promoted to functors by extension--contraction adjunctions. We begin with the coherent $k$-radical frame and its compact elements, pass to functorial extension maps, and finish with the natural nucleus comparing ordinary and $k$-radical ideals. This order separates the lattice-theoretic input from the genuinely categorical comparison.

\subsection{The \texorpdfstring{$k$}{k}-radical frame}

We first construct the $k$-radical object at a fixed semiring. The main goals are to identify its frame operations with the topology of the $k$-prime spectrum, determine its compact elements, and prove coherence before any morphisms are introduced.

For a family \((I_\lambda)_{\lambda\in\Lambda}\) in \(\RIdk(S)\), define
\[
\mathop{\bigvee_{\!k}}_{\lambda\in\Lambda}I_\lambda
 =\sqrt[k]{\sum_{\lambda\in\Lambda}I_\lambda},
\qquad
\bigwedge_{\lambda\in\Lambda}I_\lambda
 =\bigcap_{\lambda\in\Lambda}I_\lambda.
\]
Because $(0)$ is $k$-radical under the standing hypotheses, these operations make \(\RIdk(S)\) a complete lattice with bottom $(0)$ and top $S$.

\begin{remark}\label{rem:ordinary-radical-frame}
The radical ideals form a complete lattice \(\RId(S)\) with
\[
\bigvee_{\lambda\in\Lambda}I_\lambda
 =\sqrt{\sum_{\lambda\in\Lambda}I_\lambda},
\qquad
\bigwedge_{\lambda\in\Lambda}I_\lambda
 =\bigcap_{\lambda\in\Lambda}I_\lambda.
\]
Every $k$-radical ideal is radical, but the inclusion of underlying posets need not preserve joins. Thus \(\RIdk(S)\) should not in general be regarded as a subframe of \(\RId(S)\); the correct relationship is the nucleus constructed in Subsection~\ref{subsec:natural-comparison}.
\end{remark}

We shall use the following product formula.

\begin{lemma}[{\cite[Lemma~3.9]{AG24}}]\label{lem:k-radical-product}
For \(I,J\in\RIdk(S)\),
\[
\sqrt[k]{I\odot_k J}=I\cap J.
\]
\end{lemma}

\begin{proposition}\label{prop:k-radical-open-frame}
For every \(S\in\CRig\), the map
\[
\delta^k_S\colon\RIdk(S)\longrightarrow\Omega(\Speck(S)),
\qquad
\delta^k_S(I)=\bigcup_{a\in I}D_k(a)
              =\Speck(S)\setminus\mathcal V_k(I),
\]
is a frame isomorphism. In particular, \(\RIdk(S)\) is a frame.
\end{proposition}

\begin{proof}
An ideal $K$ and its $k$-radical are contained in the same $k$-prime ideals, hence
\[
\delta^k_S(\sqrt[k]{K})=\bigcup_{a\in K}D_k(a).
\]
If \(\delta^k_S(I)=\delta^k_S(J)\), then \(\mathcal V_k(I)=\mathcal V_k(J)\), and the $k$-radicality of $I$ and $J$ gives
\[
I=\bigcap_{P\in\mathcal V_k(I)}P
 =\bigcap_{P\in\mathcal V_k(J)}P=J.
\]
Thus \(\delta^k_S\) is injective. It is surjective because the sets \(D_k(a)\) form a basis: an open set \(\bigcup_{a\in A}D_k(a)\) is the image of \(\sqrt[k]{\langle A\rangle}\).

The definition of joins gives
\[
\delta^k_S\left(\mathop{\bigvee_{\!k}}_\lambda I_\lambda\right)
 =\bigcup_\lambda\delta^k_S(I_\lambda).
\]
For a $k$-prime ideal $P$,
\[
I\cap J\subseteq P
\quad\Longleftrightarrow\quad
I\subseteq P\text{ or }J\subseteq P.
\]
The reverse implication is immediate. Conversely, if neither $I$ nor $J$ is contained in $P$, choose $x\in I\setminus P$ and $y\in J\setminus P$. Then $xy\in I\cap J$, whereas primality gives $xy\notin P$, a contradiction. Consequently,
\begin{align*}
P\in\delta^k_S(I\cap J)
&\Longleftrightarrow I\cap J\nsubseteq P\\
&\Longleftrightarrow I\nsubseteq P\text{ and }J\nsubseteq P\\
&\Longleftrightarrow P\in\delta^k_S(I)\cap\delta^k_S(J).
\end{align*}
Thus \(\delta^k_S\) preserves finite meets and is a frame isomorphism.
\end{proof}

For a finite family \(a_1,\ldots,a_n\in S\), write
\[
[a_1,\ldots,a_n]_k
 =\sqrt[k]{\langle a_1,\ldots,a_n\rangle};
\]
in particular, \([a]_k=\sqrt[k]{\langle a\rangle}\).

\begin{lemma}\label{lem:finite-k-radical-join}
For every finite family \(a_1,\ldots,a_n\in S\),
\[
[a_1,\ldots,a_n]_k
 =[a_1]_k\mathbin{\bigvee_{\!k}}\cdots
  \mathbin{\bigvee_{\!k}}[a_n]_k.
\]
\end{lemma}

\begin{proof}
Both sides are the least $k$-radical ideal containing the given finite family.
\end{proof}

\begin{lemma}\label{lem:k-radical-compacts}
The compact elements of \(\RIdk(S)\) are precisely the ideals
\([a_1,\ldots,a_n]_k\) generated $k$-radically by a finite family.
\end{lemma}

\begin{proof}
Under \(\delta^k_S\), the principal $k$-radical ideal \([a]_k\) corresponds to the quasi-compact basic open \(D_k(a)\). Hence principal $k$-radical ideals are compact, and their finite joins are compact by Lemma~\ref{lem:finite-k-radical-join}.

Conversely, every \(I\in\RIdk(S)\) is the join of the principal $k$-radical ideals below it:
\[
I=\mathop{\bigvee_{\!k}}_{a\in I}[a]_k.
\]
If $I$ is compact, a finite subjoin already equals $I$, and Lemma~\ref{lem:finite-k-radical-join} yields the required form.
\end{proof}

\begin{lemma}\label{lem:k-radical-algebraic}
The frame \(\RIdk(S)\) is algebraic.
\end{lemma}

\begin{proof}
The equality \(I=\bigvee_{\!k,a\in I}[a]_k\) expresses every element as a join of compact elements below it.
\end{proof}

\begin{theorem}\label{thm:k-radical-coherent}
For every \(S\in\CRig\), the frame \(\RIdk(S)\) is coherent.
\end{theorem}

\begin{proof}
It is algebraic by Lemma~\ref{lem:k-radical-algebraic}, and its top element is the compact element \(S=[1]_k\). Moreover,
\[
\delta^k_S([ab]_k)=D_k(ab)=D_k(a)\cap D_k(b)
 =\delta^k_S([a]_k\cap[b]_k),
\]
so \([ab]_k=[a]_k\cap[b]_k\). If
\[
I=\mathop{\bigvee_{\!k}}_{i=1}^{m}[a_i]_k,
\qquad
J=\mathop{\bigvee_{\!k}}_{j=1}^{n}[b_j]_k
\]
are compact, distributivity gives
\[
I\cap J
 =\mathop{\bigvee_{\!k}}_{1\leq i\leq m,\,1\leq j\leq n}
  [a_i b_j]_k,
\]
which is compact. Thus compact elements are closed under finite meets.
\end{proof}

\subsection{Extension, contraction, and functoriality}

We now pass from the objectwise frames to their action on semiring homomorphisms. Contraction is the right adjoint, radical extension is its left adjoint, and this adjoint description supplies preservation properties and functoriality without elementwise choices.

Let \(f\colon S\to T\) be a morphism in \(\CRig\). Contraction is the inverse-image map
\[
f^{-1}\colon\RIdk(T)\longrightarrow\RIdk(S).
\]
This map is well defined: if \(J=\bigcap_{\lambda}P_\lambda\) is $k$-radical in $T$, then
\[
f^{-1}(J)=\bigcap_{\lambda}f^{-1}(P_\lambda),
\]
with the evident interpretation for an empty intersection, and every \(f^{-1}(P_\lambda)\) is $k$-prime by Proposition~\ref{prop:spectrum-functors}. Its left adjoint is the $k$-radical extension map defined below.

\begin{definition}
For \(I\in\RIdk(S)\), set
\[
\RIdk(f)(I)=\sqrt[k]{\langle f[I]\rangle}.
\]
\end{definition}

\begin{remark}\label{rem:k-extension-contraction}
The ideal \(\RIdk(f)(I)\) is the least $k$-radical ideal of $T$ containing \(f[I]\). For \(I\in\RIdk(S)\) and \(J\in\RIdk(T)\),
\begin{equation}\label{eq:k-extension-contraction}
\RIdk(f)(I)\subseteq J
\quad\Longleftrightarrow\quad
I\subseteq f^{-1}(J).
\end{equation}
Thus
\[
\RIdk(f)\dashv f^{-1}.
\]
This adjunction is the categorical reason for the definition of the direct-image map.
\end{remark}

\begin{lemma}\label{lem:k-extension-principal}
For every \(x\in S\),
\[
\RIdk(f)([x]_k)=[f(x)]_k.
\]
\end{lemma}

\begin{proof}
For every \(J\in\RIdk(T)\), the adjunction \eqref{eq:k-extension-contraction} gives
\begin{align*}
\RIdk(f)([x]_k)\subseteq J
&\Longleftrightarrow [x]_k\subseteq f^{-1}(J)\\
&\Longleftrightarrow x\in f^{-1}(J)\\
&\Longleftrightarrow f(x)\in J\\
&\Longleftrightarrow [f(x)]_k\subseteq J.
\end{align*}
The two $k$-radical ideals have the same upper bounds and are therefore equal.
\end{proof}

\begin{proposition}\label{prop:k-extension-coherent}
For every morphism \(f\colon S\to T\) in \(\CRig\), the map
\[
\RIdk(f)\colon\RIdk(S)\longrightarrow\RIdk(T)
\]
is a coherent frame homomorphism.
\end{proposition}

\begin{proof}
As a left adjoint, \(\RIdk(f)\) preserves arbitrary joins. It preserves bottom because \(f(0)=0\) and $(0)$ is $k$-radical, and it preserves top because \(f(1)=1\).

Let \(I,J\in\RIdk(S)\). For every \(P\in\Speck(T)\), the contraction \(f^{-1}(P)\) is $k$-prime. For ideals $A,B$ and a prime ideal $Q$, one has
\[
A\cap B\subseteq Q
\quad\Longleftrightarrow\quad
A\subseteq Q\text{ or }B\subseteq Q:
\]
if neither inclusion holds, choose $a\in A\setminus Q$ and $b\in B\setminus Q$; then $ab\in A\cap B$ but $ab\notin Q$. Therefore
\begin{align*}
\RIdk(f)(I\cap J)\subseteq P
&\Longleftrightarrow I\cap J\subseteq f^{-1}(P)\\
&\Longleftrightarrow I\subseteq f^{-1}(P)\ \text{or}\ J\subseteq f^{-1}(P)\\
&\Longleftrightarrow \RIdk(f)(I)\subseteq P\ \text{or}\ \RIdk(f)(J)\subseteq P\\
&\Longleftrightarrow \RIdk(f)(I)\cap\RIdk(f)(J)\subseteq P.
\end{align*}
The two sides are $k$-radical ideals lying in exactly the same $k$-prime ideals, hence they are equal. Thus finite meets are preserved. Finally, Lemmas~\ref{lem:k-radical-compacts} and~\ref{lem:k-extension-principal} show that compact elements are sent to compact elements.
\end{proof}

\begin{proposition}\label{prop:k-radical-functor}
The assignments
\[
S\longmapsto\RIdk(S),
\qquad
(f\colon S\to T)\longmapsto\RIdk(f)
\]
define a covariant functor
\[
\RIdk\colon\CRig\longrightarrow\CohFrm.
\]
\end{proposition}

\begin{proof}
The right adjoint of \(\RIdk(f)\) is contraction. The identity homomorphism has identity contraction, so uniqueness of left adjoints gives \(\RIdk(1_S)=1_{\RIdk(S)}\). For composable maps \(S\xrightarrow{f}T\xrightarrow{g}U\),
\[
(gf)^{-1}=f^{-1}g^{-1}.
\]
Both \(\RIdk(gf)\) and \(\RIdk(g)\RIdk(f)\) are left adjoint to this composite contraction. Uniqueness of left adjoints yields
\[
\RIdk(gf)=\RIdk(g)\RIdk(f).
\]
\end{proof}

\begin{remark}\label{rem:coherent-frames-algebraic-lattices}
The category \(\CohFrm\) is a non-full subcategory of \(\AlgLat\): coherent frame homomorphisms are Scott-continuous, but not every Scott-continuous map is a frame homomorphism. Consequently, \(\RIdk\) may also be viewed as an algebraic-lattice-valued functor.
\end{remark}

\begin{lemma}\label{lem:k-extension-reflects-zero}
If \(f\colon S\to T\) is injective, then \(\RIdk(f)\) reflects the bottom element:
\[
\RIdk(f)(I)=(0)\quad\Longrightarrow\quad I=(0).
\]
\end{lemma}

\begin{proof}
Since \(f[I]\subseteq\RIdk(f)(I)\), the hypothesis implies \(f[I]=\{0\}\). Injectivity gives \(I=(0)\).
\end{proof}

\begin{proposition}\label{prop:k-extension-properties}
Let \(f\colon S\to T\) be a morphism in \(\CRig\).
\begin{enumerate}
\item If $f$ is surjective, then \(\RIdk(f)\) is onto as a frame homomorphism.
\item If $f$ is injective, then \(\RIdk(f)\) is dense.
\item If $f$ is an isomorphism, then \(\RIdk(f)\) is codense.
\item The frame \(\RIdk(S)\) is joinfit if and only if
\[
\bigcap_{M\in\Maxk(S)}M=(0).
\]
\end{enumerate}
\end{proposition}

\begin{proof}
(1) If \(J\in\RIdk(T)\) and $f$ is surjective, then \(f[f^{-1}(J)]=J\), and hence
\[
\RIdk(f)(f^{-1}(J))=\sqrt[k]{J}=J.
\]
Thus \(\RIdk(f)f^{-1}=1\), which is the stated onto condition.

(2) This is Lemma~\ref{lem:k-extension-reflects-zero}.

(3) If $f$ is an isomorphism, functoriality makes \(\RIdk(f)\) a frame isomorphism. It therefore reflects the top element.

(4) Suppose first that \(\bigcap\Maxk(S)=(0)\), and let \((0)<I\in\RIdk(S)\). Choose \(0\neq a\in I\) and a maximal $k$-ideal $M$ with \(a\notin M\). Maximal $k$-ideals are $k$-prime \cite{Sen}, hence $k$-radical. Since \(I\bigvee_k M\) properly contains $M$, maximality gives \(I\bigvee_k M=S\). Thus the frame is joinfit.

Conversely, assume that \(\RIdk(S)\) is joinfit and that \(0\neq a\) lies in every maximal $k$-ideal. Choose a proper \(J\in\RIdk(S)\) with \([a]_k\bigvee_k J=S\), and extend $J$ to a maximal $k$-ideal $M$. Then \(a\in M\), so \([a]_k\subseteq M\), while \(J\subseteq M\). This contradicts \([a]_k\bigvee_k J=S\). Therefore the intersection of the maximal $k$-ideals is zero.
\end{proof}

The ordinary radical construction is parallel. We include the statement and enough of the proof to make the functorial comparison explicit.

\begin{theorem}\label{thm:radical-functor}
For every \(S\in\CRig\) the following hold.
\begin{enumerate}
\item The compact elements of \(\RId(S)\) are precisely the ideals \([a_1,\ldots,a_n]\) generated radically by a finite family.
\item \(\RId(S)\) is a coherent frame.
\item For a morphism \(f\colon S\to T\) in \(\CRig\), the formula
\[
\RId(f)(I)=\sqrt{\langle f[I]\rangle}
\]
defines a coherent frame homomorphism, left adjoint to contraction, and
\(\RId(f)([x])=[f(x)]\).
\item These assignments define a covariant functor
\[
\RId\colon\CRig\longrightarrow\CohFrm.
\]
\end{enumerate}
\end{theorem}

\begin{proof}
Define
\[
\delta_S\colon\RId(S)\longrightarrow\Omega(\Spec(S)),
\qquad
\delta_S(I)=\bigcup_{a\in I}D(a).
\]
The proof of Proposition~\ref{prop:k-radical-open-frame}, with prime ideals and ordinary radicals in place of $k$-prime ideals and $k$-radicals, shows that \(\delta_S\) is a frame isomorphism. The compact-open basis \(\{D(a)\}\) then gives (1) and (2).

For (3), contraction sends radical ideals to radical ideals, and radical extension is its left adjoint. The argument of Proposition~\ref{prop:k-extension-coherent} proves preservation of finite meets; the adjunction gives arbitrary joins, and principal radical ideals are carried to principal radical ideals. Functoriality in (4) follows from uniqueness of left adjoints exactly as in Proposition~\ref{prop:k-radical-functor}.
\end{proof}

\begin{proposition}\label{prop:natural-spectral-representations}
The frame isomorphisms \(\delta_S\) and \(\delta^k_S\) are natural in $S$. Equivalently, there are natural isomorphisms
\[
\RId\cong\Omega\circ\Spec,
\qquad
\RIdk\cong\Omega\circ\Speck,
\]
where the spectrum functors are viewed as functors \(\CRig\to\SpecSp^{\op}\).
\end{proposition}

\begin{proof}
Let \(f\colon S\to T\) be a morphism in \(\CRig\). Both routes around the first naturality square preserve arbitrary joins, so it suffices to evaluate them on \([a]\):
\[
\delta_T(\RId(f)([a]))
 =\delta_T([f(a)])
 =D_T(f(a))
 =(f^*)^{-1}(D_S(a)).
\]
The same calculation with \([a]_k\) and \(D_k(a)\) proves naturality for \(\delta^k\).
\end{proof}

\subsection{The natural nucleus comparison}\label{subsec:natural-comparison}

The two radical doctrines are related by $k$-radicalization. In this subsection we show that this comparison is a nucleus, identify its fixed-point frame, prove naturality with respect to radical extension, and translate the resulting quotient into a dense spectral and localic embedding.

\begin{proposition}\label{prop:k-radical-nucleus}
For every \(S\in\CRig\), the map
\[
\mathfrak j_S\colon\RId(S)\longrightarrow\RId(S),
\qquad
\mathfrak j_S(I)=\sqrt[k]{I}
 =\mathop{\bigvee_{\!k}}_{a\in I}[a]_k,
\]
is a nucleus. Its fixed-point frame is \(\RIdk(S)\).
\end{proposition}

\begin{proof}
The map is monotone, extensive, and idempotent because $k$-radicalization is a closure operator. Its values are radical ideals, being intersections of prime ideals.

Let \(I,J\in\RId(S)\). For \(P\in\Speck(S)\),
\[
I\cap J\subseteq P
\quad\Longleftrightarrow\quad
I\cdot J\subseteq P
\quad\Longleftrightarrow\quad
I\subseteq P\ \text{or}\ J\subseteq P.
\]
For the first equivalence, if \(I\cdot J\subseteq P\) and \(x\in I\cap J\), then \(x^2\in P\), whence \(x\in P\) because $P$ is radical. The second equivalence is primality. Thus
\(\mathcal V_k(I\cap J)=\mathcal V_k(I)\cup\mathcal V_k(J)\), and therefore
\[
\mathfrak j_S(I\cap J)=\mathfrak j_S(I)\cap\mathfrak j_S(J).
\]
The fixed points are exactly the $k$-radical ideals by definition.
\end{proof}

We use the same symbol \(\mathfrak j_S\) for the corestriction
\(\RId(S)\to\RIdk(S)\).

\begin{corollary}\label{cor:nucleus-coherent-map}
The map \(\mathfrak j_S\colon\RId(S)\to\RIdk(S)\) is a coherent frame homomorphism.
\end{corollary}

\begin{proof}
A nucleus corestriction is a frame homomorphism \cite[Chapter~II, Lemma~2.2]{Johnstone}. By Theorem~\ref{thm:radical-functor}, the compact elements of \(\RId(S)\) are the finite radical ideals. For such a family,
\[
\mathfrak j_S([a_1,\ldots,a_n])=[a_1,\ldots,a_n]_k,
\]
because both sides are the intersection of the $k$-prime ideals containing the generators. The image is compact by Lemma~\ref{lem:k-radical-compacts}.
\end{proof}

\begin{theorem}\label{thm:k-radical-dense-sublocale}
For every \(S\in\CRig\), the nucleus \(\mathfrak j_S\) presents the locale with frame \(\RIdk(S)\) as a dense sublocale of the locale with frame \(\RId(S)\).
\end{theorem}

\begin{proof}
The sublocale is determined by the nucleus \(\mathfrak j_S\). Since $(0)$ is $k$-radical under the standing assumptions,
\(\mathfrak j_S(0)=0\); this is precisely the density criterion for a nucleus.
\end{proof}

\begin{proposition}\label{prop:natural-k-radical-comparison}
The family \((\mathfrak j_S)_{S\in\CRig}\) defines a natural transformation
\[
\mathfrak j\colon\RId\Longrightarrow\RIdk.
\]
Moreover, under the natural spectral representations of Proposition~\ref{prop:natural-spectral-representations}, the component \(\mathfrak j_S\) is restriction of opens along
\(\iota_S\colon\Speck(S)\hookrightarrow\Spec(S)\); explicitly, the square
\[
\begin{tikzcd}[column sep=large]
\RId(S) \arrow[r,"\mathfrak j_S"] \arrow[d,"\delta_S"']
  & \RIdk(S) \arrow[d,"\delta^k_S"] \\
\Omega(\Spec(S)) \arrow[r,"\iota_S^{-1}"']
  & \Omega(\Speck(S))
\end{tikzcd}
\]
commutes.
\end{proposition}

\begin{proof}
Let \(f\colon S\to T\). On a principal radical ideal,
\begin{align*}
(\RIdk(f)\mathfrak j_S)([a])
 &=\RIdk(f)([a]_k)=[f(a)]_k\\
 &=\mathfrak j_T([f(a)])
 =(\mathfrak j_T\RId(f))([a]).
\end{align*}
Every radical ideal is a join of principal radical ideals, and both composites preserve joins. Hence the naturality square commutes.

For the spectral square, both routes preserve arbitrary joins, and on \([a]\) they give
\[
D(a)\cap\Speck(S)=D_k(a).
\]
This proves the assertion.
\end{proof}

\begin{corollary}\label{cor:k-spectrum-dense}
Under the standing reduced and conical hypotheses, the natural spectral embedding
\[
\iota_S\colon\Speck(S)\hookrightarrow\Spec(S)
\]
has dense image.
\end{corollary}

\begin{proof}
The frame map \(\iota_S^{-1}\) is identified with the dense frame homomorphism \(\mathfrak j_S\). Since both locales are spatial, locale density is equivalent to topological density of the corresponding spectral embedding.
\end{proof}

\section{Support functors and categorical representations}\label{sec:support-functors}

The preceding section represents radical ideals by open sets. We now pass to the compact part of this representation and formulate support as a universal lattice-valued invariant. After proving reconstruction and its $k$- and strong variants, we compare all three spectra at the level of order, constructible topology, locales, and sheaves. We then isolate a density criterion for $r$-semirings, derive the Stone criterion, establish the fixed-base universal properties, and construct the global support bifibration. The relevant chain of dualities is
\[
\xymatrix@C-=1pc@R=2pc{
\CRig \ar@{->}[rrr]^{\Spec} & & &
\SpecSp^{\op}
\ar@/^1pc/@{->}[rrrr]^{\Omega} & & \simeq & &
\CohFrm
\ar@/^1pc/@{->}[llll]^{\Sigma}
\ar@/^1pc/@{->}[rrrr]^{\mathfrak k(-)} & & \simeq & &
\DLat
\ar@/^1pc/@{->}[llll]^{\mathsf{Idl}(-)}.
}
\]

\begin{remark}\label{rem:stone-ideal-completion}
The first equivalence is coherent Stone duality between spectral spaces and coherent frames, while the second is ideal completion: a coherent frame is recovered from its compact elements, and a bounded distributive lattice is recovered as the compact part of its ideal frame. See \cite[Chapter~II]{Johnstone,Picado}. Consequently, a semiring $S$ determines, up to canonical isomorphism, a bounded distributive lattice whose prime spectrum is homeomorphic to \(\Spec(S)\).
\end{remark}

Joyal's support for rings is an early form of this construction \cite{JoyalSupport}; see also \cite[Chapter~12]{SpectralBook} and \cite[Chapter~V, Section~3]{Johnstone}. We use a definition that records both the algebraic relations and the generating property needed for a universal representation.

\subsection{Support objects and their basic properties}

We begin with the intrinsic definition of a support and then extract the algebraic information forced by its axioms. In particular, we establish the identities used later in reconstruction and show that complemented idempotents are represented exactly by complemented lattice elements.

\begin{definition}\label{def:support}
Let \(S\in\CRig\) and let $L$ be a bounded distributive lattice. A map
\(\mathfrak D\colon S\to L\) is a \emph{support} if the image of \(\mathfrak D\) generates $L$ as a bounded distributive lattice and, for all \(a,b,b_1,\ldots,b_n\in S\),
\begin{enumerate}
\item \(\mathfrak D(0)=0\) and \(\mathfrak D(1)=1\);
\item \(\mathfrak D(a+b)\leq\mathfrak D(a)\vee\mathfrak D(b)\);
\item \(\mathfrak D(ab)=\mathfrak D(a)\wedge\mathfrak D(b)\);
\item if
\(\mathfrak D(a)\leq\mathfrak D(b_1)\vee\cdots\vee\mathfrak D(b_n)\), then
\(a\in[b_1,\ldots,b_n]\).
\end{enumerate}
A \emph{frame support} is defined similarly, with a frame $F$ generated by the image under arbitrary joins and finite meets.

A morphism of supports
\((L,\mathfrak D)\to(M,\mathfrak E)\) is a bounded lattice homomorphism
\(h\colon L\to M\) satisfying \(h\mathfrak D=\mathfrak E\). Frame supports and frame homomorphisms define the analogous category. We denote these categories by \(\Supp(S)\) and \(\FSupp(S)\), respectively.
\end{definition}

The canonical compact-open support is
\[
D_S\colon S\longrightarrow\KOpen(\Spec(S)),
\qquad
D_S(a)=D(a),
\]
and the canonical frame support is
\[
\rho_S\colon S\longrightarrow\RId(S),
\qquad
\rho_S(a)=[a].
\]
Theorem~\ref{thm:basic-open-cover} verifies the reflection axiom in both cases.

\begin{proposition}\label{prop:support-properties}
Let \((L,\mathfrak D)\) be a support or a frame support of $S$. Then:
\begin{enumerate}
\item if $x$ is a unit, then \(\mathfrak D(x)=1\);
\item \(\mathfrak D(x^n)=\mathfrak D(x)\) for every integer \(n\geq1\);
\item if $x$ is nilpotent, then \(\mathfrak D(x)=0\);
\item if \(xy=0\), then \(\mathfrak D(x+y)=\mathfrak D(x)\vee\mathfrak D(y)\);
\item if \(x\in[y_1,\ldots,y_n]\), then
\[
\mathfrak D(x)\leq\mathfrak D(y_1)\vee\cdots\vee\mathfrak D(y_n).
\]
\end{enumerate}
\end{proposition}

\begin{proof}
If \(xy=1\), then
\(1=\mathfrak D(x)\wedge\mathfrak D(y)\), proving (1). Repeated application of multiplicativity gives (2), and (3) follows by applying (2) to a power equal to zero.

For (4), subadditivity gives one inequality. Since \(xy=0\),
\[
\mathfrak D(x)\wedge\mathfrak D(x+y)
 =\mathfrak D(x^2+xy)
 =\mathfrak D(x^2)
 =\mathfrak D(x),
\]
so \(\mathfrak D(x)\leq\mathfrak D(x+y)\); the argument for $y$ is symmetric.

For (5), choose \(m\geq1\) and coefficients \(s_i\in S\) such that
\(x^m=\sum_i s_i y_i\). Then
\[
\mathfrak D(x)=\mathfrak D(x^m)
 \leq\bigvee_i\mathfrak D(s_i y_i)
 \leq\bigvee_i\mathfrak D(y_i).
\]
\end{proof}

For a bounded distributive lattice $L$, let \(\mathfrak B(L)\) be its Boolean algebra of complemented elements. With
\[
x\oplus y=(x\wedge y^\bot)\vee(x^\bot\wedge y),
\qquad
x\odot y=x\wedge y,
\]
it is a Boolean ring. An idempotent \(e\in S\) is \emph{complemented} if there exists \(e^\perp\in S\) with
\(e+e^\perp=1\) and \(ee^\perp=0\). The complement is unique. Let \(\operatorname{Comp}(S)\) denote the complemented idempotents, equipped with
\[
e\boxplus f=ef^\perp+fe^\perp,
\qquad
e\boxdot f=ef.
\]
This is a Boolean ring \cite[Proposition~2.1.3]{Chermnykh12}.

\begin{proposition}\label{prop:support-idempotents}
Every support \(\mathfrak D\colon S\to L\) restricts to a Boolean-ring isomorphism
\[
\operatorname{Comp}(S)\xrightarrow{\ \cong\ }\mathfrak B(L).
\]
In particular, the Boolean algebra of complemented idempotents is determined by the support object.
\end{proposition}

\begin{proof}
If \(e\in\operatorname{Comp}(S)\), Proposition~\ref{prop:support-properties}(4) gives
\[
\mathfrak D(e)\vee\mathfrak D(e^\perp)=1,
\qquad
\mathfrak D(e)\wedge\mathfrak D(e^\perp)=0,
\]
so \(\mathfrak D(e^\perp)=\mathfrak D(e)^\bot\).

Suppose \(\mathfrak D(e)=\mathfrak D(f)\). Then
\[
\mathfrak D(ef^\perp)
 =\mathfrak D(e)\wedge\mathfrak D(f)^\bot=0.
\]
Since \(\mathfrak D(ef^\perp)=0=\mathfrak D(0)\), the reflection axiom gives
\(ef^\perp\in[0]=\sqrt{(0)}\). Reducedness of $S$ therefore yields
\(ef^\perp=0\). Hence
\(e=e(f+f^\perp)=ef\); symmetrically, \(f=ef\), so \(e=f\). Thus the restriction is injective.

Let \(c\in\mathfrak B(L)\). Because $L$ is generated by the support values and finite meets of support values are again support values, there are finite families \(a_i,b_j\in S\) with
\[
c=\bigvee_i\mathfrak D(a_i),
\qquad
c^\bot=\bigvee_j\mathfrak D(b_j).
\]
The equality \(c\wedge c^\bot=0\) implies \(\mathfrak D(a_i b_j)=0\). The reflection axiom gives \(a_i b_j\in[0]=\sqrt{(0)}\), and reducedness therefore yields \(a_i b_j=0\). Since
\[
\mathfrak D(1)\leq
\bigvee_i\mathfrak D(a_i)\vee\bigvee_j\mathfrak D(b_j),
\]
the reflection axiom gives an equality
\[
1=\sum_i r_i a_i+\sum_j s_j b_j
\]
for suitable coefficients. Put \(e=\sum_i r_i a_i\) and \(e^\perp=\sum_j s_j b_j\). Then \(e+e^\perp=1\) and \(ee^\perp=0\), so both are complementary idempotents. Moreover,
\(\mathfrak D(e)\leq c\), \(\mathfrak D(e^\perp)\leq c^\bot\), and their join is $1$; distributivity gives \(\mathfrak D(e)=c\). Thus the restriction is surjective.

Finally,
\[
(ef^\perp)(fe^\perp)=0.
\]
Proposition~\ref{prop:support-properties}(4), multiplicativity, and preservation of complements therefore give
\begin{align*}
\mathfrak D(e\boxplus f)
 &=(\mathfrak D(e)\wedge\mathfrak D(f)^\bot)
  \vee(\mathfrak D(f)\wedge\mathfrak D(e)^\bot),\\
\mathfrak D(e\boxdot f)
 &=\mathfrak D(e)\wedge\mathfrak D(f),
\end{align*}
so the bijection is a Boolean-ring isomorphism.
\end{proof}

Recall that a lattice-ordered ring $R$ is an \emph{$f$-ring} if
\((a\wedge b)c=ac\wedge bc\) for all \(a,b\in R\) and \(c\geq0\). Its positive cone \(R^+\) is a cancellative conical semiring \cite{Fuchs}.

\begin{lemma}\label{lem:f-ring-idempotents}
Let $R$ be a commutative unital reduced $f$-ring with \(1\geq0\), and put \(S=R^+\). Every idempotent \(e\in S\) satisfies \(0\leq e\leq1\); consequently, it is complemented in $S$, with complement \(1-e\).
\end{lemma}

\begin{proof}
Let \(u=(e-1)^+\). Since multiplication by the positive element $e$ preserves finite meets and hence finite joins and positive parts in an $f$-ring,
\[
eu=e(e-1)^+=(e(e-1))^+=0.
\]
Moreover, \(0\leq u\leq e\), and multiplication by $u\geq0$ gives
\[
0\leq u^2\leq eu=0.
\]
Reducedness implies $u=0$, so $e\leq1$. Hence \(1-e\in S\), and
\[
e+(1-e)=1,\qquad e(1-e)=0.
\]
Thus $1-e$ is the complement of $e$.
\end{proof}

\subsection{Spectral and frame reconstruction}

We now prove that a support is not merely an auxiliary invariant: it reconstructs both the prime spectrum and the radical-ideal frame. The resulting identifications are then assembled naturally for the canonical compact-open support functor.

A bounded distributive lattice $L$ is a semiring under \(\vee\) and \(\wedge\). Its semiring ideals are its lattice ideals, every such ideal is strong, and
\(\Spec(L)=\Speck(L)\).

\begin{proposition}\label{prop:support-spectrum-homeomorphism}
A support \(\mathfrak D\colon S\to L\) induces a homeomorphism
\[
\varphi_{\mathfrak D}\colon\Spec(L)\longrightarrow\Spec(S),
\qquad
\varphi_{\mathfrak D}(\mathfrak p)=\mathfrak D^{-1}(\mathfrak p).
\]
Its inverse is
\[
\psi_{\mathfrak D}(\mathfrak q)
 =\bigl\langle\mathfrak D(a)\mid a\in\mathfrak q\bigr\rangle.
\]
\end{proposition}

\begin{proof}
If \(\mathfrak p\in\Spec(L)\), the preimage \(\mathfrak D^{-1}(\mathfrak p)\) is a proper ideal: closure under addition follows from subadditivity, closure under multiplication from multiplicativity, and properness from \(\mathfrak D(1)=1\). It is prime because
\(\mathfrak D(ab)=\mathfrak D(a)\wedge\mathfrak D(b)\in\mathfrak p\) forces one of the two factors into \(\mathfrak p\).

Conversely, let \(\mathfrak q\in\Spec(S)\) and put
\(\mathfrak p_{\mathfrak q}=\langle\mathfrak D(\mathfrak q)\rangle\). It is proper: if
\(1\leq\bigvee_i\mathfrak D(q_i)\) for \(q_i\in\mathfrak q\), then the reflection axiom gives
\(1\in[q_1,\ldots,q_n]\subseteq\mathfrak q\), a contradiction. Because the image of \(\mathfrak D\) generates $L$ as a bounded distributive lattice and finite meets of support values satisfy
\[
\mathfrak D(a_1)\wedge\cdots\wedge\mathfrak D(a_r)
 =\mathfrak D(a_1\cdots a_r),
\]
every element of $L$ is a finite join of support values. For
\(x=\bigvee_i\mathfrak D(a_i)\) one has
\begin{equation}\label{eq:support-membership-prime}
x\in\mathfrak p_{\mathfrak q}
\quad\Longleftrightarrow\quad
a_i\in\mathfrak q\text{ for every }i.
\end{equation}
Indeed, if \(x\in\mathfrak p_{\mathfrak q}\), then
\[
x\leq\bigvee_{j=1}^m\mathfrak D(q_j)
\]
for some \(q_j\in\mathfrak q\). Hence each \(\mathfrak D(a_i)\) lies below that finite join, and the reflection axiom gives
\(a_i\in[q_1,\ldots,q_m]\subseteq\mathfrak q\). The reverse implication is immediate.

Now suppose \(x\wedge y\in\mathfrak p_{\mathfrak q}\) but \(x\notin\mathfrak p_{\mathfrak q}\). Write
\(x=\bigvee_i\mathfrak D(a_i)\) and \(y=\bigvee_j\mathfrak D(b_j)\). By \eqref{eq:support-membership-prime}, some \(a_{i_0}\notin\mathfrak q\), whereas
\(a_i b_j\in\mathfrak q\) for every pair \((i,j)\). Primality gives \(b_j\in\mathfrak q\) for every $j$, and hence \(y\in\mathfrak p_{\mathfrak q}\). Thus \(\mathfrak p_{\mathfrak q}\) is prime.

Equation~\eqref{eq:support-membership-prime}, applied to a single support value, gives
\(\mathfrak D^{-1}(\mathfrak p_{\mathfrak q})=\mathfrak q\). Conversely, let \(\mathfrak p\in\Spec(L)\). The lattice ideal generated by
\(\mathfrak D[\mathfrak D^{-1}(\mathfrak p)]\) is contained in \(\mathfrak p\). For the reverse inclusion, if
\(x=\bigvee_i\mathfrak D(a_i)\in\mathfrak p\), then each \(\mathfrak D(a_i)\leq x\) lies in \(\mathfrak p\), so every $a_i$ belongs to \(\mathfrak D^{-1}(\mathfrak p)\). Hence
\(x\in\psi_{\mathfrak D}\varphi_{\mathfrak D}(\mathfrak p)\). The maps are therefore inverse. Finally,
\[
\varphi_{\mathfrak D}^{-1}(D_S(a))=D_L(\mathfrak D(a)).
\]
These sets form a basis of \(\Spec(L)\): if \(x=\bigvee_i\mathfrak D(a_i)\), then
\[
D_L(x)=\bigcup_i D_L(\mathfrak D(a_i)).
\]
Thus the bijection is a homeomorphism.
\end{proof}

\begin{corollary}\label{cor:support-category-contractible}
For a fixed semiring $S$, the category \(\Supp(S)\) is equivalent to the terminal category. Equivalently, any two supports of $S$ are connected by a unique isomorphism over $S$.
\end{corollary}

\begin{proof}
Let \((L,\mathfrak D)\) be a support. Bounded distributive lattice duality identifies $L$ with \(\KOpen(\Spec(L))\), and Proposition~\ref{prop:support-spectrum-homeomorphism} transports this lattice to \(\KOpen(\Spec(S))\). Under the resulting isomorphism,
\(\mathfrak D(a)\) corresponds to \(D_S(a)\), because
\[
\varphi_{\mathfrak D}^{-1}(D_S(a))=D_L(\mathfrak D(a)).
\]
Thus every support is isomorphic over $S$ to the canonical compact-open support. Such an isomorphism is unique because the support values generate the codomain lattice. Hence \(\Supp(S)\) is a connected thin groupoid, and therefore is equivalent to the terminal category.
\end{proof}

\begin{remark}\label{rem:canonical-support-functor}
For a homomorphism \(f\colon S\to T\), inverse image along
\(f^*\colon\Spec(T)\to\Spec(S)\) restricts to a bounded lattice homomorphism
\[
\mathfrak D(f)\colon\KOpen(\Spec(S))\longrightarrow\KOpen(\Spec(T)),
\qquad
D_S(a)\longmapsto D_T(f(a)).
\]
Hence
\[
\mathfrak D=\KOpen\circ\Spec\colon\CRig\longrightarrow\DLat
\]
is a covariant functor, where \(\Spec\) is viewed as taking values in \(\SpecSp^{\op}\).
\end{remark}

\begin{definition}\label{def:canonical-support-functor}
The functor \(\mathfrak D\colon\CRig\to\DLat\) of Remark~\ref{rem:canonical-support-functor} is called the \emph{canonical support functor}.
\end{definition}

\begin{proposition}\label{prop:support-frame-reconstruction}
If \((L,\mathfrak D)\) is a support of $S$, then there is a frame isomorphism
\[
\Id(L)\cong\RId(S).
\]
The isomorphism is given by inverse image under \(\mathfrak D\), with inverse generated direct image.
\end{proposition}

\begin{proof}
Define
\[
\Phi\colon\Id(L)\longrightarrow\RId(S),
\qquad
\Phi(J)=\mathfrak D^{-1}(J),
\]
and
\[
\Psi\colon\RId(S)\longrightarrow\Id(L),
\qquad
\Psi(I)=\langle\mathfrak D(a)\mid a\in I\rangle.
\]
The preimage \(\Phi(J)\) is an ideal by the first three support axioms. It is radical because
\(x^n\in\Phi(J)\) implies
\(\mathfrak D(x)=\mathfrak D(x^n)\in J\).

Clearly \(\Psi\Phi(J)\subseteq J\). Conversely, if
\(x=\bigvee_i\mathfrak D(a_i)\in J\), then each \(\mathfrak D(a_i)\leq x\) lies in $J$, so \(a_i\in\Phi(J)\) and \(x\in\Psi\Phi(J)\). Hence \(\Psi\Phi=1\).

For an arbitrary ideal $I$ of $S$,
\begin{equation}\label{eq:support-radical-closure}
\Phi\Psi(I)=\sqrt I.
\end{equation}
Indeed, if \(x\in\Phi\Psi(I)\), then
\(\mathfrak D(x)\leq\bigvee_i\mathfrak D(a_i)\) for finitely many \(a_i\in I\), and the reflection axiom gives
\(x\in[a_1,\ldots,a_n]\subseteq\sqrt I\). Conversely, if \(x^n\in I\), then
\(\mathfrak D(x)=\mathfrak D(x^n)\in\Psi(I)\). Thus \(\Phi\Psi(I)=I\) for radical $I$. The inverse order isomorphisms preserve the complete lattice operations and hence are frame isomorphisms.
\end{proof}

\begin{remark}\label{rem:natural-support-reconstruction}
For the canonical support functor, Proposition~\ref{prop:support-frame-reconstruction} is natural in $S$ and gives
\[
\mathsf{Idl}\circ\mathfrak D\cong\RId.
\]
Indeed, for a morphism \(f\colon S\to T\), the two induced frame maps preserve arbitrary joins and agree on the principal compact generators: generated direct image carries the lattice ideal generated by \(D_S(a)\) to the lattice ideal generated by \(D_T(f(a))\), while
\(\RId(f)([a])=[f(a)]\). Together with Proposition~\ref{prop:natural-spectral-representations}, this yields natural identifications
\[
\Omega(\Spec(S))\cong\RId(S),
\qquad
\Omega(\Speck(S))\cong\RIdk(S),
\]
and therefore
\[
\Sigma\RId(S)\cong\Spec(S),
\qquad
\Sigma\RIdk(S)\cong\Speck(S).
\]
\end{remark}

\subsection{Variants: \texorpdfstring{$k$}{k}-supports and strong supports}

This subsection adapts the support formalism to the $k$-radical and strong-radical closure doctrines. We give canonical and function-semiring examples, construct the strong-prime spectrum functor, and prove the corresponding spectral representation for $\mathfrak l$-supports.

For \(b_1,\ldots,b_n\in S\), let
\([b_1,\ldots,b_n]_{\mathfrak l}\) be the least strong radical ideal containing the indicated elements. It exists as the intersection of all strong radical ideals containing them.

\begin{definition}\label{def:k-l-supports}
Let $L$ be a bounded distributive lattice generated by the image of a map
\(\mathfrak D\colon S\to L\) satisfying conditions (1)--(3) of Definition~\ref{def:support}.
\begin{enumerate}
\item The pair \((L,\mathfrak D)\) is a \emph{$k$-support} if
\[
\mathfrak D(a)\leq\mathfrak D(b_1)\vee\cdots\vee\mathfrak D(b_n)
\quad\Longrightarrow\quad
a\in[b_1,\ldots,b_n]_k.
\]
\item It is an \emph{$\mathfrak l$-support} if
\(\mathfrak D(a+b)=\mathfrak D(a)\vee\mathfrak D(b)\) and the same implication holds with
\([b_1,\ldots,b_n]_{\mathfrak l}\) in place of the $k$-radical ideal.
\end{enumerate}
Frame $k$-supports and frame $\mathfrak l$-supports are defined by replacing the bounded lattice with a frame generated under arbitrary joins and finite meets.
\end{definition}

\begin{example}\label{ex:canonical-k-support}
The map
\[
S\longrightarrow\KOpen(\Speck(S)),
\qquad
a\longmapsto D_k(a),
\]
is a $k$-support. Further examples are as follows.
\begin{enumerate}
\item Let \(\RIdk^{\mathrm{fin}}(S)\) denote the bounded distributive lattice of ideals \([a_1,\ldots,a_n]_k\). Then
\(a\mapsto[a]_k\) is a $k$-support on \(\RIdk^{\mathrm{fin}}(S)\) and a frame $k$-support on \(\RIdk(S)\).

\item Let $X$ be a Tychonoff space and let \(C^+(X)\) be the semiring of nonnegative continuous real-valued functions. The cozero map
\[
\Coz\colon C^+(X)\longrightarrow\Omega(X),
\qquad
\Coz(f)=\{x\in X\mid f(x)\neq0\},
\]
satisfies
\[
\Coz(f+g)=\Coz(f)\cup\Coz(g),
\qquad
\Coz(fg)=\Coz(f)\cap\Coz(g),
\]
and its image is a basis. It is not a frame $\mathfrak l$-support for every Tychonoff space. Take
\(X=[0,1]\), \(f(x)=x\), and
\[
g(0)=0,
\qquad
g(x)=e^{-1/x^2}\quad(x>0).
\]
Then \(\Coz(f)=\Coz(g)\). Every prime ideal of \(C^+(X)\) is strong \cite[Theorem~3.12]{BiswasFilomat}. By Lemma~\ref{lem:ordinary-radical-prime-intersection}, \(\sqrt{\langle g\rangle}\) is an intersection of prime ideals and is therefore strong. It is a strong radical ideal containing $g$, so
\([g]_{\mathfrak l}\subseteq\sqrt{\langle g\rangle}\). Conversely, \([g]_{\mathfrak l}\) is a radical ideal containing $g$, hence
\(\sqrt{\langle g\rangle}\subseteq[g]_{\mathfrak l}\). Thus
\([g]_{\mathfrak l}=\sqrt{\langle g\rangle}\). If \(f\in[g]_{\mathfrak l}\), then \(f^n=hg\) for some \(n\geq1\) and \(h\in C^+(X)\), which is impossible because
\(x^n e^{1/x^2}\) is unbounded as \(x\rightarrow0\).

If $X$ is a $P$-space, however, \(\Coz\) is a frame $\mathfrak l$-support. If
\(\Coz(f)\subseteq\Coz(g_1)\cup\cdots\cup\Coz(g_n)\), put
\(g=g_1+\cdots+g_n\). Then \(Z(g)\subseteq Z(f)\), and $Z(g)$ is clopen. The function
\[
h(x)=
\begin{cases}
f(x)/g(x),&g(x)>0,\\
0,&g(x)=0,
\end{cases}
\]
is continuous and satisfies \(f=hg\), proving the reflection condition.

For an arbitrary set $X$, the analogous map
\(\mathds{R}_+^X\to\mathcal P(X)\) is a frame $\mathfrak l$-support. For a measurable space \((X,\mathcal A)\), let \(\mathcal M^+(X,\mathcal A)\) denote the semiring of finite-valued nonnegative measurable functions. Its cozero map
\(\mathcal M^+(X,\mathcal A)\to\mathcal A\) is an $\mathfrak l$-support; it is a frame $\mathfrak l$-support when \(\mathcal A\) is closed under arbitrary unions. The same pointwise quotient proves the reflection condition, and indicator functions generate the codomain; see \cite{Measure}.
\end{enumerate}
\end{example}

Let \(\Id_{\mathfrak l}(S)\) denote the set of proper strong ideals and \(\Specl(S)\) the set of strong prime ideals, both with their hull--kernel topology.

\begin{proposition}\label{prop:strong-ideal-spectra}
The spaces \(\Id_{\mathfrak l}(S)\) and \(\Specl(S)\) are spectral.
\end{proposition}

\begin{proof}
Identify a subset \(A\subseteq S\) with its characteristic function in the Sierpiński cube \(\{0,1\}^S\), where \(\{0\}\) is open. This cube is spectral, and the coordinate conditions
\[
D(a)=\{A\mid a\notin A\},
\qquad
V(a)=\{A\mid a\in A\}
\]
are clopen in the patch topology. The proper ideals form the patch-closed set
\begin{align*}
\Id^{\mathrm{pr}}(S)
={}&D(1)\cap V(0)\\
&\cap\bigcap_{a,b\in S}\bigl(D(a)\cup V(ab)\bigr)\\
&\cap\bigcap_{a,b\in S}\bigl(D(a)\cup D(b)\cup V(a+b)\bigr).
\end{align*}
The strong-ideal condition is also patch closed:
\[
\Id_{\mathfrak l}(S)
 =\Id^{\mathrm{pr}}(S)\cap
  \bigcap_{a,b\in S}
  \bigl(D(a+b)\cup(V(a)\cap V(b))\bigr).
\]
The Sierpi\'nski cube is spectral, and every constructibly closed subspace of such a cube is spectral in the inherited topology \cite[Lemma~5.23.13]{StacksSpectral}. Here that inherited topology is exactly the hull--kernel topology. This proves the assertion for strong ideals.

Likewise,
\[
\Spec(S)
 =\Id^{\mathrm{pr}}(S)\cap
  \bigcap_{a,b\in S}
  \bigl(D(ab)\cup V(a)\cup V(b)\bigr)
\]
is patch closed. Hence
\(\Specl(S)=\Spec(S)\cap\Id_{\mathfrak l}(S)\) is patch closed and therefore spectral.
\end{proof}

For \(a\in S\), write
\[
D_{\mathfrak l,S}(a)=\{P\in\Specl(S)\mid a\notin P\}.
\]
These sets form a basis of quasi-compact open subsets of \(\Specl(S)\). Indeed, they are the intersections with \(\Specl(S)\) of the coordinate compact opens in the Sierpi\'nski cube used in Proposition~\ref{prop:strong-ideal-spectra}; equivalently, they are clopen in the constructible topology. Moreover,
\[
D_{\mathfrak l,S}(a)\cap D_{\mathfrak l,S}(b)
 =D_{\mathfrak l,S}(ab).
\]

\begin{proposition}\label{prop:strong-spectrum-functor}
Inverse image defines a contravariant functor
\[
\Specl\colon\CRig^{\op}\longrightarrow\SpecSp.
\]
For a morphism \(f\colon S\to T\), its action is
\[
f^*\colon\Specl(T)\longrightarrow\Specl(S),
\qquad P\longmapsto f^{-1}(P).
\]
\end{proposition}

\begin{proof}
The inverse image of a proper prime ideal is a proper prime ideal. If $P$ is strong and \(a+b\in f^{-1}(P)\), then
\(f(a)+f(b)=f(a+b)\in P\), so \(f(a),f(b)\in P\); hence \(a,b\in f^{-1}(P)\). Thus inverse image preserves strong prime ideals. Moreover,
\[
(f^*)^{-1}(D_{\mathfrak l,S}(a))
 =D_{\mathfrak l,T}(f(a)),
\]
so the induced map is spectral. Identity and composition follow from the corresponding laws for inverse image.
\end{proof}

\begin{remark}
The canonical $k$-support \(a\mapsto[a]_k\) need not preserve addition. By contrast, an $\mathfrak l$-support is precisely a semiring homomorphism from $S$ to the lattice semiring \((L,\vee,\wedge)\) satisfying the stated generation and reflection conditions.
\end{remark}

If \((L,\mathfrak D)\) is an $\mathfrak l$-support and $J$ is an ideal of $L$, then \(\mathfrak D^{-1}(J)\) is strong: from
\(\mathfrak D(a+b)=\mathfrak D(a)\vee\mathfrak D(b)\in J\), downward closure gives \(\mathfrak D(a),\mathfrak D(b)\in J\).

\begin{corollary}\label{cor:strong-support-spectrum}
For an $\mathfrak l$-support \((L,\mathfrak D)\), inverse image induces a homeomorphism
\[
\varphi^{\mathfrak l}_{\mathfrak D}\colon\Spec(L)\xrightarrow{\ \cong\ }\Specl(S),
\qquad
\mathfrak p\longmapsto\mathfrak D^{-1}(\mathfrak p).
\]
Its inverse sends a strong prime ideal \(\mathfrak q\) to the lattice ideal
\[
\bigl\langle\mathfrak D(a)\mid a\in\mathfrak q\bigr\rangle.
\]
\end{corollary}

\begin{proof}
For \(\mathfrak p\in\Spec(L)\), the preceding observation and the proof of Proposition~\ref{prop:support-spectrum-homeomorphism} show that \(\mathfrak D^{-1}(\mathfrak p)\) is a strong prime ideal. Conversely, let \(\mathfrak q\in\Specl(S)\), and put
\[
\mathfrak p_{\mathfrak q}=\bigl\langle\mathfrak D(a)\mid a\in\mathfrak q\bigr\rangle.
\]
The ideal \(\mathfrak q\) is radical because it is prime. If \(1\in\mathfrak p_{\mathfrak q}\), then
\(1\leq\bigvee_i\mathfrak D(q_i)\) for finitely many \(q_i\in\mathfrak q\), and the $\mathfrak l$-reflection axiom gives
\(1\in[q_1,\ldots,q_n]_{\mathfrak l}\subseteq\mathfrak q\), a contradiction. Thus \(\mathfrak p_{\mathfrak q}\) is proper.

Every element of $L$ is a finite join of support values. For
\(x=\bigvee_i\mathfrak D(a_i)\), the same argument as in \eqref{eq:support-membership-prime}, now using the $\mathfrak l$-reflection axiom, gives
\[
x\in\mathfrak p_{\mathfrak q}
\quad\Longleftrightarrow\quad
a_i\in\mathfrak q\text{ for every }i.
\]
To prove primality, suppose that \(x\wedge y\in\mathfrak p_{\mathfrak q}\) and \(x\notin\mathfrak p_{\mathfrak q}\). Write
\[
x=\bigvee_i\mathfrak D(a_i),\qquad
y=\bigvee_j\mathfrak D(b_j).
\]
The criterion supplies an index \(i_0\) with \(a_{i_0}\notin\mathfrak q\), while membership of \(x\wedge y\) gives \(a_i b_j\in\mathfrak q\) for all $i,j$. Since \(\mathfrak q\) is prime, every \(b_j\) lies in \(\mathfrak q\), and hence \(y\in\mathfrak p_{\mathfrak q}\). Thus \(\mathfrak p_{\mathfrak q}\) is prime. The same membership criterion gives
\[
\mathfrak D^{-1}(\mathfrak p_{\mathfrak q})=\mathfrak q.
\]
Conversely, if \(\mathfrak p\in\Spec(L)\), the lattice ideal generated by \(\mathfrak D[\mathfrak D^{-1}(\mathfrak p)]\) is visibly contained in \(\mathfrak p\); the reverse inclusion follows by writing an arbitrary \(x\in\mathfrak p\) as a finite join of support values, each of which is below $x$. Hence the two displayed assignments are inverse. Finally,
\[
(\varphi^{\mathfrak l}_{\mathfrak D})^{-1}(D_{\mathfrak l,S}(a))
 =D_L(\mathfrak D(a)),
\]
and these sets form bases. Hence the bijection is a homeomorphism.
\end{proof}

\subsection{Comparison of the three spectral functors}\label{subsec:three-spectra}

The ordinary, $k$-, and strong prime spectra form a nested functorial system. We now make this comparison precise at three levels: the specialization orders, the constructible topologies, and the associated frames, locales, and sheaf topoi. This resolves the comparison without imposing equality of the three notions of prime ideal.

\begin{proposition}\label{prop:nested-spectra}
For every \(S\in\CRig\), strong primality implies $k$-primality, and $k$-primality implies primality. Consequently there are natural spectral embeddings
\[
\lambda_S\colon\Specl(S)\hookrightarrow\Speck(S),
\qquad
\iota_S\colon\Speck(S)\hookrightarrow\Spec(S),
\]
which form natural transformations of contravariant functors
\[
\Specl\xRightarrow{\lambda}\Speck\xRightarrow{\iota}\Spec.
\]
For each of the three spectra, the specialization order is inclusion of ideals. Hence \(\lambda_S\), \(\iota_S\), and every inverse-image map induced by a semiring homomorphism are monotone; the two inclusions are order embeddings.
\end{proposition}

\begin{proof}
A strong ideal is a $k$-ideal, so the inclusions of underlying sets are immediate. The topologies on the smaller spectra are the subspace hull--kernel topologies, and
\[
D_{\mathfrak l,S}(a)=D_{k,S}(a)\cap\Specl(S),
\qquad
D_{k,S}(a)=D_S(a)\cap\Speck(S),
\]
so both inclusions are spectral embeddings. Inverse image preserves strong, $k$-, and ordinary prime ideals, and therefore the squares
\[
\begin{tikzcd}[column sep=4.5em]
\Specl(T) \arrow[r,hook,"\lambda_T"] \arrow[d,"f^*"']
& \Speck(T) \arrow[r,hook,"\iota_T"] \arrow[d,"f^*"']
& \Spec(T) \arrow[d,"f^*"]\\
\Specl(S) \arrow[r,hook,"\lambda_S"']
& \Speck(S) \arrow[r,hook,"\iota_S"']
& \Spec(S)
\end{tikzcd}
\]
commute.

For any one of these hull--kernel spectra, the closure of a point $P$ consists exactly of the prime ideals of the same type containing $P$. We use the convention that $P\leq_{\mathrm{sp}}Q$ means $Q\in\overline{\{P\}}$. With this convention,
\[
P\leq_{\mathrm{sp}}Q\quad\Longleftrightarrow\quad P\subseteq Q.
\]
The remaining order statements follow from preservation of inclusion by inverse image.
\end{proof}

For a spectral space $X$, write \(X^{\mathrm{con}}\) for its constructible, or patch, topology. It is generated by the quasi-compact opens and their complements and is a Stone topology \cite{Hochster,Johnstone}. For \(\tau\in\{\varnothing,k,\mathfrak l\}\), write \(D_{\tau,S}(a)\) for the corresponding basic open in the ordinary, $k$-, or strong prime spectrum, and write \(\mathcal V_{\tau,S}(a)\) for its complement.

\begin{proposition}\label{prop:constructible-spectra}
For every \(S\in\CRig\), the subsets \(\Speck(S)\) and \(\Specl(S)\) are closed in \(\Spec(S)^{\mathrm{con}}\), and \(\Specl(S)\) is closed in \(\Speck(S)^{\mathrm{con}}\). Their intrinsic constructible topologies agree with the corresponding subspace topologies. Consequently
\[
\Specl(S)^{\mathrm{con}}
 \xhookrightarrow{\ \lambda_S\ }
\Speck(S)^{\mathrm{con}}
 \xhookrightarrow{\ \iota_S\ }
\Spec(S)^{\mathrm{con}}
\]
is a natural chain of closed embeddings of Stone spaces. For every morphism \(f\colon S\to T\), each of the three maps $f^*$ is continuous for the constructible topologies.
\end{proposition}

\begin{proof}
Inside \(\Spec(S)\), the $k$-ideal and strong-ideal conditions can be written as
\[
\Speck(S)
 =\Spec(S)\cap
  \bigcap_{a,b\in S}
  \bigl(D(a+b)\cup D(a)\cup\mathcal V(b)\bigr),
\]
\[
\Specl(S)
 =\Spec(S)\cap
  \bigcap_{a,b\in S}
  \bigl(D(a+b)\cup(\mathcal V(a)\cap\mathcal V(b))\bigr).
\]
Every displayed factor is clopen in the constructible topology, so both subsets are constructibly closed. Since \(\Specl(S)\subseteq\Speck(S)\), it is also constructibly closed in \(\Speck(S)\).

The quasi-compact open bases on the three spectra are respectively
\(D_S(a)\), \(D_{k,S}(a)\), and \(D_{\mathfrak l,S}(a)\). Their complements are the corresponding hulls, so the constructible topology on each subspace is exactly the topology induced from \(\Spec(S)^{\mathrm{con}}\). Finally,
\[
(f^*)^{-1}(D_{\tau,S}(a))=D_{\tau,T}(f(a)),
\qquad
(f^*)^{-1}(\mathcal V_{\tau,S}(a))=\mathcal V_{\tau,T}(f(a))
\]
for \(\tau\in\{\varnothing,k,\mathfrak l\}\). Hence every inverse-image map is constructibly continuous, and naturality was proved in Proposition~\ref{prop:nested-spectra}.
\end{proof}

\begin{corollary}\label{cor:localic-sheaf-spectral-comparison}
Restriction of open sets gives natural transformations of coherent-frame-valued functors
\[
\Omega\circ\Spec
 \Longrightarrow
\Omega\circ\Speck
 \Longrightarrow
\Omega\circ\Specl,
\]
whose components are surjective coherent frame homomorphisms. Under the natural representations of Proposition~\ref{prop:natural-spectral-representations}, the first transformation is \(\mathfrak j\colon\RId\Rightarrow\RIdk\). Dually, write
\[
i_{\mathfrak l,k,S}\colon\Specl(S)\hookrightarrow\Speck(S),
\qquad
i_{k,S}\colon\Speck(S)\hookrightarrow\Spec(S).
\]
These inclusions determine natural locale embeddings and compatible geometric embeddings
\[
\Sh(\Specl(S))
 \xrightarrow{\ i_{\mathfrak l,k,S}\ }
\Sh(\Speck(S))
 \xrightarrow{\ i_{k,S}\ }
\Sh(\Spec(S)).
\]
Here an arrow labelled by $i$ denotes the geometric morphism
\(i^*\dashv i_*\), and its direct-image functor $i_*$ is fully faithful.
\end{corollary}

\begin{proof}
Inverse image along a spectral embedding preserves arbitrary unions, finite intersections, and quasi-compact opens. Because each map is a subspace inclusion, restriction of opens is surjective. The identification of the first transformation with $\mathfrak j$ is Proposition~\ref{prop:natural-k-radical-comparison}. Passing to opposite frame maps gives locale embeddings. For a locale embedding $i$, the induced geometric morphism \(i^*\dashv i_*\) has fully faithful direct image; applying this construction to the two natural inclusions gives the displayed geometric embeddings. See \cite{Johnstone,Che93,Jun17,Man19,Manuell}.
\end{proof}

\begin{corollary}\label{cor:strong-spectrum-density-criterion}
Put
\[
\mathfrak n_{\mathfrak l}(S)
 =\bigcap_{P\in\Specl(S)}P,
\]
with empty intersection interpreted as $S$. Then
\[
\overline{\Specl(S)}^{\,\Spec(S)}
 =\mathcal V_S(\mathfrak n_{\mathfrak l}(S)),
\qquad
\overline{\Specl(S)}^{\,\Speck(S)}
 =\mathcal V_{k,S}(\mathfrak n_{\mathfrak l}(S)).
\]
Under the standing reduced and conical hypotheses, \(\Specl(S)\) is dense in \(\Spec(S)\) if and only if it is dense in \(\Speck(S)\), and these conditions are equivalent to
\(\mathfrak n_{\mathfrak l}(S)=(0)\).
\end{corollary}

\begin{proof}
In a hull--kernel spectrum, the smallest closed set containing a family $Y$ of prime ideals is the hull of \(\bigcap_{P\in Y}P\). This gives the two closure formulas. Reducedness gives
\(\bigcap_{P\in\Spec(S)}P=(0)\), while the standing assumptions give
\(\bigcap_{P\in\Speck(S)}P=\sqrt[k]{(0)}=(0)\). The density equivalences follow.
\end{proof}

\subsection{\texorpdfstring{$r$}{r}-semirings and density of the \texorpdfstring{$k$}{k}-spectrum}\label{subsec:r-semirings}

This final subsection returns to ordinary commutative unital semirings and isolates a maximal-ideal criterion for spectral density. We first exhibit strict but dense inclusions of spectra, then prove the semisimple $r$-semiring criterion and record why its hypotheses are sufficient rather than necessary. The proposition proved here does not require the reduced or conical hypotheses imposed earlier.

The ordinary and $k$-prime spectra can differ even when the $k$-prime spectrum is dense. Consider
\[
S=\mathds{Q}_{\geq0}[\sqrt{2}]
 =\{a+b\sqrt{2}\mid a,b\in\mathds{Q}_{\geq0}\}
\]
with its usual operations. Its units are exactly the nonzero elements with one zero coefficient. Indeed, for $a,b>0$,
\[
a^{-1}=\frac1a,
\qquad
(b\sqrt{2})^{-1}=\frac{\sqrt{2}}{2b},
\]
whereas an equality
\((a+b\sqrt{2})(c+d\sqrt{2})=1\) with $a,b>0$ and $c,d\geq0$ would force the \(\sqrt{2}\)-coefficient
\(ad+bc\) to vanish; this is impossible because $c$ and $d$ cannot both be zero. Consequently, if a proper ideal contains a nonzero element with exactly one nonzero coefficient, it contains a unit. Thus a nonzero element of a proper ideal must have the form \(a+b\sqrt{2}\) with \(a,b>0\). Since \(a^2\neq2b^2\), exactly one of the following identities applies:
\[
(a+b\sqrt{2})\frac{a}{a^2-2b^2}
 =(a+b\sqrt{2})\frac{b\sqrt{2}}{a^2-2b^2}+1
 \quad\text{if }a^2>2b^2,
\]
\[
(a+b\sqrt{2})\frac{b\sqrt{2}}{2b^2-a^2}
 =(a+b\sqrt{2})\frac{a}{2b^2-a^2}+1
 \quad\text{if }2b^2>a^2.
\]
All coefficients in the applicable identity are nonnegative. If a proper $k$-ideal contained \(a+b\sqrt{2}\), the two products involving that element would lie in the ideal, and subtractivity would force $1$ into the ideal. Hence $(0)$ is the only proper $k$-ideal. Since $S$ has no zero divisors,
\[
\Speck(S)=\{(0)\}.
\]
On the other hand,
\[
\mathfrak m=\{0\}\cup
\{a+b\sqrt{2}\mid a,b\in\mathds{Q}_{>0}\}
\]
is a proper ideal whose complement is the multiplicative group of units. Thus \(\mathfrak m\) is prime and \(\Spec(S)\) has more than one point. The inclusion \(\Speck(S)\subsetneq\Spec(S)\) is nevertheless dense because $(0)$ is the generic point. This example also illustrates Corollary~\ref{cor:k-spectrum-dense}.

\begin{definition}[{\cite[Definition~3.3]{Chermnykh12}}]\label{def:r-semiring}
A semiring $S$ is an \emph{$r$-semiring} if every maximal ideal of $S$ is a $k$-ideal.
\end{definition}

Let \(\CRigr\) denote the full subcategory of \(\CRig\) consisting of $r$-semirings. The density statement below, however, remains valid for arbitrary commutative unital semirings and does not require the standing conical or reduced hypotheses.

\begin{proposition}\label{prop:r-semiring-density}
Let $S$ be a semisimple $r$-semiring, meaning
\[
\bigcap_{M\in\Max(S)}M=(0).
\]
Then the natural spectral embedding
\[
\Speck(S)\hookrightarrow\Spec(S)
\]
has dense image.
\end{proposition}

\begin{proof}
It suffices to show that every nonempty basic open \(D(a)\) meets \(\Speck(S)\). Nonemptiness implies \(a\neq0\). By semisimplicity there is a maximal ideal $M$ with \(a\notin M\). Since $S$ is an $r$-semiring, $M$ is a $k$-ideal. It is also prime: if \(xy\in M\) and \(x\notin M\), maximality gives an equality
\(1=m+sx\) with \(m\in M\), \(s\in S\), and hence
\[
y=my+sxy\in M.
\]
Thus \(M\in D(a)\cap\Speck(S)\).
\end{proof}

\begin{remark}
Under the standing reduced and conical hypotheses, density already follows from the natural nucleus comparison in Corollary~\ref{cor:k-spectrum-dense}. Proposition~\ref{prop:r-semiring-density} is retained because its direct maximal-ideal proof remains valid without those hypotheses and isolates the role of the $r$-condition.

The assumptions of Proposition~\ref{prop:r-semiring-density} are sufficient but not necessary. For \(S=\mathds{N}\), the prime ideals are
\[
(0),
\qquad
p\mathds{N}\quad(p\text{ prime}),
\qquad
\mathfrak m=\mathds{N}\setminus\{1\}.
\]
To see that the list is exhaustive, let $P$ be a nonzero prime ideal distinct from \(\mathfrak m\), and choose \(n>1\) outside $P$. Factoring a positive element of $P$ and applying primality repeatedly shows that $P$ contains a prime number $p$. It cannot contain a second prime \(q\neq p\), because \(p\mathds{N}+q\mathds{N}\) contains all sufficiently large integers and would therefore contain a sufficiently large power of $n$. Now \(p\mathds{N}\subseteq P\) because $p\in P$. Conversely, if $m\in P$ is positive, a prime factorization of $m$ and repeated primality produce a prime divisor $q$ of $m$ with $q\in P$. The uniqueness just proved forces $q=p$, so $p\mid m$. Hence \(P\subseteq p\mathds{N}\), and therefore \(P=p\mathds{N}\).

The $k$-prime ideals are $(0)$ and the ideals \(p\mathds{N}\), whereas \(\mathfrak m\) is not a $k$-ideal. Thus
\[
\Spec(\mathds{N})\setminus\Speck(\mathds{N})=\{\mathfrak m\}.
\]
Any open neighborhood of \(\mathfrak m\) contains a basic open \(D(n)\) with \(n\notin\mathfrak m\), hence \(n=1\); that basic open is the entire spectrum. Therefore \(\Speck(\mathds{N})\) is dense although \(\mathds{N}\) is neither semisimple nor an $r$-semiring.
\end{remark}

\subsection{A Stone-space criterion}

We specialize the support representation to positive cones of reduced $f$-rings. The aim is to identify, in a single equivalence theorem, the topological Stone condition, Booleanity of the support lattice, von Neumann regularity, zero-dimensionality of the radical frame, and maximality of prime ideals.

\begin{theorem}\label{thm:stone-spectrum-criterion}
Let $R$ be a commutative unital reduced $f$-ring with \(1\geq0\), put \(S=R^+\), and let \((L,\mathfrak D)\) be a support of $S$. The following are equivalent:
\begin{enumerate}
\item \(\Spec(S)\) is a Stone space;
\item $L$ is a Boolean algebra;
\item $S$ is von Neumann regular: for every \(a\in S\) there exists \(b\in S\) with \(a=a^2b\);
\item \(\RId(S)\) is zero-dimensional;
\item every prime ideal of $S$ is maximal.
\end{enumerate}
\end{theorem}

\begin{proof}
(1)$\Leftrightarrow$(2). Proposition~\ref{prop:support-spectrum-homeomorphism} identifies \(\Spec(S)\) with \(\Spec(L)\). The prime spectrum of a bounded distributive lattice is Hausdorff precisely when the lattice is Boolean \cite[Chapter~II, Proposition~4.4]{Johnstone}. A spectral Hausdorff space is a Stone space.

(2)$\Rightarrow$(3). By Lemma~\ref{lem:f-ring-idempotents} and Proposition~\ref{prop:support-idempotents}, every element of $L$ has the form \(\mathfrak D(e)\) for a unique idempotent \(e\in S\). Given $a$, choose $e$ with \(\mathfrak D(a)=\mathfrak D(e)\). The reflection axiom gives \([a]=[e]\). Since $e$ is idempotent and \(e\in[a]\), there exists \(b\in S\) with \(e=ab\). Since \(a\in[e]\), there are \(n\geq1\) and \(c\in S\) with \(a^n=ce\). Hence
\((ae^\perp)^n=0\), and reducedness yields \(ae^\perp=0\). Therefore
\[
a=a(e+e^\perp)=ae=a^2b.
\]

(3)$\Rightarrow$(4). If \(a=a^2b\) and \(e=ab\), then \(e^2=e\) and \([a]=[e]\). The radical ideals \([e]\) and \([e^\perp]\) are complementary. Since the principal radical ideals generate \(\RId(S)\), the frame is zero-dimensional.

(4)$\Rightarrow$(2). Proposition~\ref{prop:support-frame-reconstruction} identifies \(\RId(S)\) with \(\Id(L)\). The compact elements of \(\Id(L)\) are the principal ideals and identify with the elements of $L$. In a compact zero-dimensional frame, every compact element is a finite join of complemented elements and hence complemented. Its complement is compact: if \(c\vee d=1\), \(c\wedge d=0\), and \(d\leq\bigvee X\), compactness of $1$ applied to \(1\leq c\vee\bigvee X\) gives a finite subcover, which becomes a finite cover of $d$ after meeting with $d$. Thus complementation restricts to the compact elements, and $L$ is Boolean.

(1)$\Rightarrow$(5). In the Zariski spectrum, inclusion of prime ideals is the specialization order. Let $P$ be prime and choose a maximal ideal $M$ containing it. Such an $M$ is prime: if \(xy\in M\) and \(x\notin M\), maximality gives \(1=m+sx\) for some \(m\in M\) and \(s\in S\), whence \(y=my+sxy\in M\). If \(P\subsetneq M\), then $M$ is a proper specialization of $P$, contradicting Hausdorffness. Therefore \(P=M\), and every prime ideal is maximal.

(5)$\Rightarrow$(2). The homeomorphism of Proposition~\ref{prop:support-spectrum-homeomorphism} preserves the specialization order, so every prime ideal of $L$ is maximal. Hence $L$ is Boolean by \cite[Chapter~II, Corollary~4.9]{Johnstone}.
\end{proof}

\begin{corollary}
For every Tychonoff space $X$, the frame \(\RId(C^+(X))\) is zero-dimensional if and only if $X$ is a $P$-space.
\end{corollary}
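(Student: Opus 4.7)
The plan is to invoke Theorem~\ref{zdc} with the semiring $S = C^+(X)$, which is the positive cone of the $f$-ring $C(X)$. By the equivalence of $(3)$ and $(4)$ in that theorem, $\mathsf{RId}(C^+(X))$ is zero-dimensional if and only if $C^+(X)$ is a regular semiring. The problem therefore reduces to proving that $C^+(X)$ is regular precisely when $X$ is a $P$-space, and I shall use the standard characterization that a Tychonoff space is a $P$-space exactly when every cozero set is clopen (equivalently, every zero set is open).

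For the forward implication, I would assume $C^+(X)$ is regular. Any cozero set of $X$ can be written as $\mathrm{Coz}(f)$ with $f \in C^+(X)$, since $\mathrm{Coz}(h) = \mathrm{Coz}(h^2)$ for every $h \in C(X)$. Pick $g \in C^+(X)$ with $f = f^2 g$ and set $e \coloneqq fg$; a short computation gives $e^2 = f^2 g^2 = (f^2 g) g = fg = e$, so $e$ is an idempotent of $C^+(X)$ and therefore takes only the values $0$ and $1$. Since $e = 1$ on $\mathrm{Coz}(f)$ while $e = 0$ on $Z(f)$, the continuous function $e$ is the characteristic function of $\mathrm{Coz}(f)$, forcing this cozero set to be clopen. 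Hence $X$ is a $P$-space.

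For the converse, I would suppose $X$ is a $P$-space and fix $f \in C^+(X)$. Then $U \coloneqq \mathrm{Coz}(f)$ and $Z(f) = X \setminus U$ are both clopen. Defining $g \colon X \to \mathds{R}$ by $g(x) \coloneqq 1/f(x)$ on $U$ and $g(x) \coloneqq 0$ on $Z(f)$, the prescription is continuous on each clopen piece and non-negative, so $g \in C^+(X)$. The identity $f^2 g = f$ is immediate pointwise on $U$ and on $Z(f)$, which shows that $C^+(X)$ is a regular semiring.

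The main delicate point is guaranteeing in each direction that the quasi-inverse can be chosen \emph{inside} $C^+(X)$, not merely in the ambient ring $C(X)$; this is exactly what the clopen behaviour of cozero sets supplies --- in the forward direction by producing the characteristic function of $\mathrm{Coz}(f)$ as the idempotent $fg$, and in the converse by allowing the locally defined reciprocal $1/f$ on $U$ to be extended continuously by zero across $Z(f)$.
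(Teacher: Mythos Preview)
Your proof is correct and follows the same route as the paper: apply Theorem~\ref{zdc} to $S=C^+(X)$, the positive cone of the $f$-ring $C(X)$, so that zero-dimensionality of $\mathsf{RId}(C^+(X))$ is equivalent to von~Neumann regularity of $C^+(X)$. The paper dispatches the remaining equivalence ``$C^+(X)$ regular $\Leftrightarrow$ $X$ is a $P$-space'' by citing \cite[Corollary~4.17]{BiswasFilomat}, whereas you supply a clean self-contained argument via the clopen characterisation of $P$-spaces; both your idempotent computation in the forward direction and the piecewise reciprocal in the converse are standard and sound.
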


\begin{proof}
This is Theorem~\ref{thm:stone-spectrum-criterion} together with \cite[Corollary~4.17]{BiswasFilomat}.
\end{proof}

\begin{corollary}
For every measurable space \((X,\mathcal A)\), let \(\mathcal M^+(X,\mathcal A)\) be the semiring of finite-valued nonnegative measurable functions. Then the frame
\(\RId(\mathcal M^+(X,\mathcal A))\) is zero-dimensional.
\end{corollary}

\begin{proof}
The ring of finite-valued real measurable functions is a commutative unital reduced $f$-ring with positive cone \(\mathcal M^+(X,\mathcal A)\). For \(f\geq0\), the measurable function
\[
g(x)=
\begin{cases}
1/f(x),&f(x)>0,\\
0,&f(x)=0
\end{cases}
\]
satisfies \(f=f^2g\). Thus the positive cone is von Neumann regular, and Theorem~\ref{thm:stone-spectrum-criterion} applies.
\end{proof}

\subsection{Universal support categories}

We next formulate the canonical representations as universal objects in three fixed-base categories. Finite and frame supports are treated by initiality, while open supports are classified by a terminal morphism to the prime spectrum.

Let \(\RId^{\mathrm{fin}}(S)\) be the bounded distributive lattice of finite radical ideals \([a_1,\ldots,a_n]\). Then
\((\RId^{\mathrm{fin}}(S),a\mapsto[a])\) is a support and
\((\RId(S),a\mapsto[a])\) is a frame support.

An \emph{open support} of $S$ is a pair \((X,d)\) consisting of a topological space and a map \(d\colon S\to\Omega(X)\) such that \(\{d(a)\mid a\in S\}\) is a basis and
\begin{enumerate}
\item \(d(0)=\varnothing\) and \(d(1)=X\);
\item \(d(ab)=d(a)\cap d(b)\);
\item \(d(a+b)\subseteq d(a)\cup d(b)\);
\item if \(d(a)\subseteq d(b_1)\cup\cdots\cup d(b_n)\), then
\(a\in[b_1,\ldots,b_n]\).
\end{enumerate}
A morphism \((X,d)\to(Y,e)\) is a continuous map \(u\colon X\to Y\) satisfying
\(u^{-1}(e(a))=d(a)\) for every $a$. These objects and morphisms form a category \(\OSupp(S)\).

\begin{proposition}\label{prop:universal-supports}
For every \(S\in\CRig\):
\begin{enumerate}
\item \((\RId^{\mathrm{fin}}(S),a\mapsto[a])\) is initial in \(\Supp(S)\);
\item \((\RId(S),a\mapsto[a])\) is initial in \(\FSupp(S)\);
\item \((\Spec(S),a\mapsto D(a))\) is terminal in \(\OSupp(S)\).
\end{enumerate}
\end{proposition}

\begin{proof}
(1) Let \((L,\mathfrak D)\) be a support. Define
\[
\widetilde{\mathfrak D}([a_1,\ldots,a_n])
 =\mathfrak D(a_1)\vee\cdots\vee\mathfrak D(a_n).
\]
This is well defined. If
\([a_1,\ldots,a_m]=[b_1,\ldots,b_n]\), then each $a_i$ belongs to the radical ideal on the right, so Proposition~\ref{prop:support-properties}(5) gives
\(\mathfrak D(a_i)\leq\bigvee_j\mathfrak D(b_j)\); the reverse inequality is symmetric. Furthermore,
\[
[a_1,\ldots,a_m]\vee[b_1,\ldots,b_n]
 =[a_1,\ldots,a_m,b_1,\ldots,b_n]
\]
and
\[
[a_1,\ldots,a_m]\cap[b_1,\ldots,b_n]
 =\sqrt{\left\langle
 a_i b_j:1\leq i\leq m,\ 1\leq j\leq n
 \right\rangle}.
\]
The support axioms and distributivity in $L$ show that
\(\widetilde{\mathfrak D}\) is a bounded lattice homomorphism. It is uniquely determined by
\(\widetilde{\mathfrak D}([a])=\mathfrak D(a)\), proving initiality.

(2) Let \((F,\mathfrak D)\) be a frame support and define
\[
\widehat{\mathfrak D}(I)=\bigvee_{a\in I}\mathfrak D(a).
\]
Proposition~\ref{prop:support-properties}(5) gives
\(\widehat{\mathfrak D}([a])=\mathfrak D(a)\). Let \((I_\lambda)\) be a family of radical ideals. Since
\(I_\lambda\subseteq\bigvee_\lambda I_\lambda\), monotonicity gives
\[
\bigvee_\lambda\widehat{\mathfrak D}(I_\lambda)
 \leq
\widehat{\mathfrak D}\left(\bigvee_\lambda I_\lambda\right).
\]
Conversely, if
\(a\in\sqrt{\sum_\lambda I_\lambda}\), then for some $n\geq1$ one can write
\(a^n=y_1+\cdots+y_r\) with each $y_j$ belonging to one of the $I_\lambda$. Hence
\[
\mathfrak D(a)=\mathfrak D(a^n)
 \leq\bigvee_{j=1}^r\mathfrak D(y_j)
 \leq\bigvee_\lambda\widehat{\mathfrak D}(I_\lambda).
\]
Taking the join over all such $a$ proves the reverse inequality and therefore
\[
\widehat{\mathfrak D}\left(\bigvee_\lambda I_\lambda\right)
 =\bigvee_\lambda\widehat{\mathfrak D}(I_\lambda).
\]
Moreover,
\begin{align*}
\widehat{\mathfrak D}(I)\wedge\widehat{\mathfrak D}(J)
 &=\bigvee_{a\in I,\,b\in J}\mathfrak D(ab)\\
 &\leq\widehat{\mathfrak D}(I\cap J),
\end{align*}
while the reverse inequality follows because every \(c\in I\cap J\) contributes a support value below both joins. Thus \(\widehat{\mathfrak D}\) is a frame homomorphism. Since
\(I=\bigvee_{a\in I}[a]\), it is unique.

(3) Let \((X,d)\) be an open support. For \(x\in X\), put
\[
u(x)=\{a\in S\mid x\notin d(a)\}.
\]
The open-support axioms show this directly. One has \(0\in u(x)\) and \(1\notin u(x)\). If \(a,b\in u(x)\), then
\(x\notin d(a)\cup d(b)\), and the inclusion
\(d(a+b)\subseteq d(a)\cup d(b)\) gives \(a+b\in u(x)\). If \(a\in u(x)\) and \(s\in S\), then
\(d(sa)=d(s)\cap d(a)\), so \(sa\in u(x)\). Finally, if \(ab\in u(x)\), then
\(x\notin d(a)\cap d(b)\), whence \(a\in u(x)\) or \(b\in u(x)\). Thus \(u(x)\) is a proper prime ideal. Moreover,
\[
u^{-1}(D(a))=d(a)
\]
for every $a$. Hence $u$ is continuous and is a morphism of open supports. Any such morphism is determined by this membership condition, so $u$ is unique.
\end{proof}

The proposition packages the support theory into three universal statements: finite lattice support and frame support are free representations, while the prime spectrum is a classifying space for open supports.

\subsection{The fibred category of finite supports}\label{subsec:support-bifibration}

We now allow the base semiring to vary. The reflection axiom is strong enough to produce a unique map of support lattices over every semiring homomorphism; consequently the fixed-base categories \(\Supp(S)\) are the fibers of a particularly rigid Grothendieck bifibration. The canonical support functor supplies its distinguished cleavage.

\begin{definition}\label{def:total-support-category}
Let \(\TotSupp\) be the category whose objects are triples
\((S,L,\mathfrak D)\), where \(S\in\CRig\) and \((L,\mathfrak D)\in\Supp(S)\). A morphism
\[
(f,h)\colon(S,L,\mathfrak D)\longrightarrow(T,M,\mathfrak E)
\]
consists of a semiring homomorphism \(f\colon S\to T\) and a bounded lattice homomorphism \(h\colon L\to M\) satisfying
\[
h\mathfrak D=\mathfrak E f.
\]
Composition is componentwise. The projection
\[
\pi\colon\TotSupp\longrightarrow\CRig,
\qquad
(S,L,\mathfrak D)\longmapsto S,
\]
is called the \emph{total support projection}.
\end{definition}

\begin{lemma}\label{lem:support-change-of-scalars}
Let \(f\colon S\to T\) be a morphism in \(\CRig\), let
\((L,\mathfrak D)\in\Supp(S)\), and let
\((M,\mathfrak E)\in\Supp(T)\). There is a unique bounded lattice homomorphism
\[
h_f\colon L\longrightarrow M
\]
such that \(h_f\mathfrak D=\mathfrak E f\). If
\(x=\bigvee_{i=1}^n\mathfrak D(a_i)\), then
\[
h_f(x)=\bigvee_{i=1}^n\mathfrak E(f(a_i)).
\]
\end{lemma}

\begin{proof}
Every element of $L$ is a finite join of support values: a bounded distributive lattice term in the generators can be put into disjunctive normal form, and finite meets satisfy
\[
\mathfrak D(a_1)\wedge\cdots\wedge\mathfrak D(a_r)
 =\mathfrak D(a_1\cdots a_r).
\]
Suppose
\(\bigvee_i\mathfrak D(a_i)\leq\bigvee_j\mathfrak D(b_j)\). The reflection axiom gives
\(a_i\in[b_1,\ldots,b_m]\) for every $i$. Applying $f$ yields
\(f(a_i)\in[f(b_1),\ldots,f(b_m)]\), and Proposition~\ref{prop:support-properties}(5), applied to $\mathfrak E$, gives
\[
\mathfrak E(f(a_i))
 \leq\bigvee_j\mathfrak E(f(b_j)).
\]
Thus the displayed formula is independent of the chosen representation of $x$.

It plainly preserves finite joins and the bounds. If
\(x=\bigvee_i\mathfrak D(a_i)\) and
\(y=\bigvee_j\mathfrak D(b_j)\), then distributivity and multiplicativity give
\begin{align*}
h_f(x\wedge y)
 &=\bigvee_{i,j}\mathfrak E(f(a_i b_j))\\
 &=\left(\bigvee_i\mathfrak E(f(a_i))\right)
   \wedge
   \left(\bigvee_j\mathfrak E(f(b_j))\right)
 =h_f(x)\wedge h_f(y).
\end{align*}
Hence $h_f$ is a bounded lattice homomorphism and satisfies the required equation. Uniqueness follows because the values of $\mathfrak D$ generate $L$.
\end{proof}

\begin{theorem}\label{thm:support-bifibration}
The total support projection
\[
\pi\colon\TotSupp\longrightarrow\CRig
\]
is full, faithful, and essentially surjective. It is both a Grothendieck fibration and a Grothendieck opfibration, and every morphism of \(\TotSupp\) is both cartesian and cocartesian. Consequently $\pi$ is an equivalence of categories, and its fiber over $S$ is \(\Supp(S)\). The canonical compact-open support defines a functor
\[
\mathsf s\colon\CRig\longrightarrow\TotSupp,
\qquad
S\longmapsto\bigl(S,\KOpen(\Spec(S)),\mathfrak D_S\bigr),
\]
which is a bicartesian section of $\pi$ and a quasi-inverse to it.

For every \(f\colon S\to T\), cartesian and cocartesian change of scalars between the fibers exist and are unique up to unique natural isomorphism. On the canonical compact-open supports, the structural arrow of $\mathsf s(f)$ is
\[
\mathfrak D(f)\colon\KOpen(\Spec(S))\longrightarrow\KOpen(\Spec(T)),
\qquad
D_S(a)\longmapsto D_T(f(a)).
\]
Equivalently, under finite radical ideals it is
\([a_1,\ldots,a_n]\mapsto[f(a_1),\ldots,f(a_n)]\).
\end{theorem}

\begin{proof}
Lemma~\ref{lem:support-change-of-scalars} gives, over every semiring homomorphism $f$ and between every chosen pair of supports over its source and target, exactly one morphism of \(\TotSupp\). Therefore
\[
\operatorname{Hom}_{\TotSupp}
 ((S,L,\mathfrak D),(T,M,\mathfrak E))
 \xrightarrow{\ \pi\ }
\operatorname{Hom}_{\CRig}(S,T)
\]
is a bijection. Thus $\pi$ is full and faithful. It is essentially surjective because every $S$ has its canonical compact-open support.

Let $u\colon A\to B$ lie over $f\colon S\to T$. If
$v\colon C\to B$ lies over $f g$, Lemma~\ref{lem:support-change-of-scalars} supplies a unique arrow $w\colon C\to A$ over $g$. The composites $u w$ and $v$ lie over the same base morphism and are therefore equal by uniqueness. Hence $u$ is cartesian. The dual factorization argument shows that $u$ is cocartesian. Thus every arrow is bicartesian and $\pi$ is a bifibration in the sense of Grothendieck \cite{GrothendieckFib}.

The fiber over $S$ consists exactly of supports of $S$ and their fixed-base morphisms, so it is \(\Supp(S)\). Chosen cartesian and cocartesian lifts define the two change-of-scalars pseudofunctors. Their uniqueness up to unique natural isomorphism follows from the fact that every fiber is a contractible groupoid.

The canonical compact-open support is functorial by Remark~\ref{rem:canonical-support-functor}; hence it defines a section $\mathsf s$ of $\pi$. Every arrow in $\TotSupp$ is bicartesian, so this section is bicartesian. For an object $A=(S,L,\mathfrak D)$, Lemma~\ref{lem:support-change-of-scalars} gives a unique arrow
\[
\epsilon_A\colon\mathsf s\pi(A)\longrightarrow A
\]
over $1_S$. It is an isomorphism, since the unique arrow in the opposite direction has composites lying over $1_S$ and uniqueness forces both composites to be identities. These arrows are natural for the same uniqueness reason. Thus $\mathsf s$ is a quasi-inverse to $\pi$. The displayed formula for $\mathsf s(f)$ is Remark~\ref{rem:canonical-support-functor}; the finite-radical formula follows from Theorem~\ref{thm:radical-functor} restricted to compact elements.
\end{proof}

\begin{remark}
The categorical content of Theorem~\ref{thm:support-bifibration} lies in the base-change formula and the canonical bicartesian section. The equivalence itself reflects the rigidity imposed by finite generation and the reflection axiom: each fiber \(\Supp(S)\) is contractible. Frame and open supports retain their initial or terminal objects from Proposition~\ref{prop:universal-supports}; however, the proof does not extend automatically to them, because finite reflection does not control arbitrary joins or arbitrary base change. Additional hypotheses would be required before asserting an analogous bifibration statement.
\end{remark}

\section{Quantale completion and monadicity}\label{sec:quantale-completion}

This section isolates the ideal-completion part of the theory. We first construct the $k$-ideal quantale functor on complete idealic semirings and recall its free universal property. We then compute the induced monad, compare its frame restriction with the classical ideal-lattice monad, and identify its Eilenberg--Moore category. Throughout this section, semirings are commutative, unital, complete, idealic, and additively idempotent; reducedness and conicality are not assumed.

\subsection{Complete idealic semirings and the \texorpdfstring{$k$}{k}-ideal functor}

We first identify the idempotent-semiring setting in which all $k$-ideals form a quantale. After fixing the natural order and principal $k$-ideals, we define extension on morphisms and prove that the construction is a functor to integral commutative quantales.

An additively idempotent semiring \(\ddot S\) carries its natural order
\[
x\leq y\quad\Longleftrightarrow\quad x+y=y.
\]
We use a dotted symbol to distinguish such semirings from the reduced conical semirings of the preceding sections.

\begin{definition}\label{def:complete-idealic-semiring}
A commutative unital additively idempotent semiring \(\ddot S\) is
\begin{enumerate}
\item \emph{complete} if every subset has a supremum in the natural order;
\item \emph{idealic} if \(x\leq1\) for every \(x\in\ddot S\).
\end{enumerate}
\end{definition}

Complete idealic semirings appear in the spectral theory of idempotent semirings and in constructions related to geometry over \(\mathds{F}_1\); see \cite{Takagi,Ray22}. We write \(\CIdRig\) for the category whose objects are commutative unital complete idealic semirings and whose morphisms are unital semiring homomorphisms. The morphisms are not assumed to preserve arbitrary suprema.

\begin{lemma}\label{lem:RIdk-complete-idealic}
For every \(S\in\CRig\), the frame \(\RIdk(S)\), with addition given by \(\bigvee_k\) and multiplication given by intersection, is a complete idealic semiring.
\end{lemma}

\begin{proof}
Theorem~\ref{thm:k-radical-coherent} shows that \(\RIdk(S)\) is a frame. Regard its join as addition and its meet as multiplication. The bottom is the additive identity, the top $S$ is the multiplicative identity, and frame distributivity gives the semiring distributive laws. Arbitrary joins exist and every element lies below the multiplicative unit.
\end{proof}

Let $A$ be a complete idealic semiring. A proper element \(p<1\) is \emph{prime} if
\[
xy\leq p\quad\Longrightarrow\quad x\leq p\text{ or }y\leq p.
\]
The \emph{Zariski space} \(\Zar(A)\) is the set of prime elements, with basic opens
\[
\mathbb D_A(x)=\{p\in\Zar(A)\mid x\nleq p\}.
\]

\begin{proposition}[{\cite[Proposition~3.40]{Ray22}}]\label{prop:zar-ridk}
For every \(S\in\CRig\), identifying the multiplication on \(\RIdk(S)\) with intersection, the assignment
\[
P\longmapsto P
\]
induces a homeomorphism
\[
\Speck(S)\xrightarrow{\ \cong\ }
\Zar\bigl(\RIdk(S),\bigvee_k,\cap\bigr).
\]
\end{proposition}

\begin{proof}
Every $k$-prime ideal is $k$-radical and is a prime element of \(\RIdk(S)\): if \(I\cap J\subseteq P\) and neither $I$ nor $J$ is contained in $P$, then choosing \(x\in I\setminus P\) and \(y\in J\setminus P\) gives \(xy\in I\cap J\setminus P\), a contradiction. Conversely, let \(P\in\RIdk(S)\) be a prime element. If \(ab\in P\), then
\[
[a]_k\cap[b]_k=[ab]_k\subseteq P,
\]
so \([a]_k\subseteq P\) or \([b]_k\subseteq P\), and therefore \(a\in P\) or \(b\in P\). Thus $P$ is $k$-prime. The bijection is a homeomorphism because
\[
\mathbb D_{\RIdk(S)}([a]_k)
 =\{P\in\Speck(S)\mid a\notin P\}
 =D_k(a).
\]
For an arbitrary \(I\in\RIdk(S)\), one has
\[
\mathbb D_{\RIdk(S)}(I)=\bigcup_{a\in I}\mathbb D_{\RIdk(S)}([a]_k),
\]
so these principal basic opens form a basis. The displayed bijection is therefore a homeomorphism.
\end{proof}

For an additively idempotent semiring, an ideal is a $k$-ideal exactly when it is downward closed in the natural order \cite[Proposition~3.11]{Ray22}. Hence
\[
\langle x\rangle_k
 =\{y\in\ddot S\mid y\leq xr\text{ for some }r\in\ddot S\}.
\]
For $k$-ideals \(I,J\), write
\[
I\vee^k J=\mathcal C_k(I+J),
\]
and define arbitrary joins analogously. Their multiplication is the $k$-ideal product
\[
I\odot_kJ=\mathcal C_k(I\cdot J)
\]
introduced in Section~\ref{sec:preliminaries}.

\begin{lemma}\label{lem:principal-k-ideals}
For all \(x,y\in\ddot S\),
\[
\langle x+y\rangle_k
 =\langle x\rangle_k\vee^k\langle y\rangle_k,
\qquad
\langle xy\rangle_k
 =\langle x\rangle_k\odot_k\langle y\rangle_k.
\]
\end{lemma}

\begin{proof}
Since \(x,y\leq x+y\), both principal $k$-ideals on the right of the first equality lie in \(\langle x+y\rangle_k\). Conversely, \(x+y\) belongs to
\(\langle x\rangle_k+\langle y\rangle_k\), so minimality of principal $k$-ideals gives the reverse inclusion.

The $k$-ideal product \(\langle x\rangle_k\odot_k\langle y\rangle_k\) is a $k$-ideal containing $xy$, and therefore it contains \(\langle xy\rangle_k\). Conversely, if \(u\leq xr\) and \(v\leq ys\), then
\(uv\leq xy(rs)\), so every generator $uv$ of the ordinary ideal product belongs to \(\langle xy\rangle_k\). Since \(\langle xy\rangle_k\) is a $k$-ideal, it contains the $k$-closure of that ordinary product. This proves the reverse inclusion.
\end{proof}

With \(\odot_k\) as multiplication and $k$-ideal join as addition, \(\Idk(\ddot S)\) is a commutative unital quantale \cite[Lemmas~3.27 and~3.28]{Ray22}. Its unit is \(\langle1\rangle_k=\ddot S\), because idealicity gives \(x\leq1\) for every $x$.

\begin{definition}\label{def:unitally-bounded-quantale}
A commutative unital quantale \((Q,\vee,*,1)\) is \emph{integral} if its unit $1$ is the maximum element. Such quantales are also called unitally bounded.
\end{definition}

Let \(\QuantTop\) be the category of integral commutative quantales and homomorphisms preserving arbitrary joins, multiplication, and the unit. Every object of \(\QuantTop\) has an underlying complete idealic semiring, so there is a forgetful functor
\[
\mathcal U\colon\QuantTop\longrightarrow\CIdRig.
\]
The converse implication on objects is false: completeness and idealicity do not force arbitrary distributivity.

\begin{example}\label{ex:complete-idealic-not-quantale}
Let
\[
C=\Omega(\mathds{R})^{\op},
\qquad
U+V=U\cap V,
\qquad
U\cdot V=U\cup V.
\]
Then \(0_C=\mathds{R}\), \(1_C=\varnothing\), and joins in the natural order are interiors of intersections. Thus $C$ is a complete idealic semiring. Put
\[
U=\mathds{R}\setminus\{0\},
\qquad
V_n=\left(-\frac1n,\frac1n\right).
\]
Since
\(\bigvee^C_{n\geq1}V_n=\operatorname{int}(\bigcap_{n\geq1}V_n)=\varnothing\),
\[
U\cdot\left(\bigvee^C_{n\geq1}V_n\right)=U,
\qquad
\bigvee^C_{n\geq1}(U\cdot V_n)=\mathds{R}.
\]
Therefore multiplication fails to distribute over this arbitrary join, and $C$ is not a quantale.
\end{example}

For a morphism \(h\colon\ddot S\to\ddot T\) in \(\CIdRig\), define
\[
\Idk(h)(I)
 =\mathcal C_k(\langle h[I]\rangle)
 =\mathop{\bigvee{}^k}_{x\in I}\langle h(x)\rangle_k.
\]
This is again best viewed as extension left adjoint to contraction on $k$-ideals.

\begin{proposition}\label{prop:Idk-functor}
The assignments
\[
\ddot S\longmapsto\Idk(\ddot S),
\qquad
h\longmapsto\Idk(h)
\]
define a functor
\[
\Idk\colon\CIdRig\longrightarrow\QuantTop.
\]
For every \(x\in\ddot S\),
\[
\Idk(h)(\langle x\rangle_k)=\langle h(x)\rangle_k.
\]
\end{proposition}

\begin{proof}
Put
\(E=\Idk(h)(\langle x\rangle_k)\). Since $h(x)$ is among the generators of $E$, minimality of the principal $k$-ideal gives
\(\langle h(x)\rangle_k\subseteq E\). Conversely, if
\(y\in\langle x\rangle_k\), then \(y\leq xr\) for some $r\in\ddot S$, and therefore
\(h(y)\leq h(x)h(r)\). Thus every element of
\(h[\langle x\rangle_k]\) belongs to \(\langle h(x)\rangle_k\). Since the latter is a $k$-ideal, it contains the ideal generated by this image and its $k$-closure. Hence
\(E\subseteq\langle h(x)\rangle_k\), proving the principal formula.

Every $k$-ideal is the join of its principal $k$-ideals. Let
\(J=\bigvee^k_\lambda I_\lambda\). If \(z\in J\), then there are finitely many
\(y_i\in\bigcup_\lambda I_\lambda\) and coefficients \(r_i\in\ddot S\) such that
\[
z\leq\sum_i r_i y_i\leq\sum_i y_i;
\]
the second inequality uses \(r_i\leq1\). Applying $h$ shows that
\(\langle h(z)\rangle_k\) lies below
\(\bigvee^k_\lambda\Idk(h)(I_\lambda)\). The reverse inequality is immediate, so \(\Idk(h)\) preserves arbitrary joins.

Moreover,
\[
I\odot_kJ=\mathop{\bigvee{}^k}_{x\in I,\,y\in J}\langle xy\rangle_k,
\]
because both sides are the least $k$-ideal containing all pairwise products. Together with Lemma~\ref{lem:principal-k-ideals}, this identity proves preservation of multiplication. The unit is preserved because
\[
\langle1\rangle_k=\ddot S,
\qquad
\Idk(h)(\langle1\rangle_k)=\langle h(1)\rangle_k=\ddot T.
\]
Hence \(\Idk(h)\) is a quantale homomorphism. Since join-preserving maps out of \(\Idk(\ddot S)\) are determined on principal $k$-ideals, the identity and composition laws follow from the principal formula.
\end{proof}

\subsection{The free integral-quantale adjunction}

Ideal completion is a standard free-quantale construction for idempotent or ordered semirings; see \cite{NishizawaFurusawa14,Fujii23}. We record the exact $k$-ideal version needed here, because its unit, counit, and action on morphisms determine the monad studied in the next two subsections. The proof constructs the adjoint extension explicitly and verifies arbitrary-join and multiplicative preservation.

The map
\[
\eta_{\ddot S}\colon\ddot S\longrightarrow\mathcal U\Idk(\ddot S),
\qquad
x\longmapsto\langle x\rangle_k,
\]
is a unital semiring homomorphism by Lemma~\ref{lem:principal-k-ideals}. It is the unit of the following adjunction.

\begin{theorem}\label{thm:k-ideal-quantale-adjunction}
The functor \(\Idk\) is left adjoint to \(\mathcal U\):
\[
\Idk\dashv\mathcal U.
\]
Equivalently, for \(\ddot S\in\CIdRig\) and \(Q\in\QuantTop\), there is a natural bijection
\[
\operatorname{Hom}_{\QuantTop}(\Idk(\ddot S),Q)
\cong
\operatorname{Hom}_{\CIdRig}(\ddot S,\mathcal UQ).
\]
\end{theorem}

\begin{proof}
Let \(f\colon\ddot S\to\mathcal UQ\) be a unital semiring homomorphism. Define
\[
\Phi_f\colon\Idk(\ddot S)\longrightarrow Q,
\qquad
\Phi_f(I)=\bigvee_{x\in I}f(x).
\]
If \(y\in\langle x\rangle_k\), then \(y\leq xr\) for some \(r\in\ddot S\). Since $f$ is monotone and \(f(r)\leq1\),
\[
f(y)\leq f(x)*f(r)\leq f(x).
\]
Because \(x\in\langle x\rangle_k\),
\begin{equation}\label{eq:Phi-principal}
\Phi_f(\langle x\rangle_k)=f(x).
\end{equation}
In particular, \(\Phi_f(\ddot S)=1\).

Let \((I_\lambda)\) be a family of $k$-ideals and put
\(J=\bigvee^k_\lambda I_\lambda\). If \(z\in J\), choose a finite inequality
\[
z\leq\sum_i r_i y_i\leq\sum_i y_i,
\qquad
y_i\in\bigcup_\lambda I_\lambda.
\]
Then
\[
f(z)\leq\bigvee_i f(y_i)
 \leq\bigvee_\lambda\Phi_f(I_\lambda).
\]
The reverse inequality follows from \(I_\lambda\subseteq J\), so \(\Phi_f\) preserves arbitrary joins.

Let \(z\in I\odot_kJ\). Since $k$-closure is downward closure in the natural order, there are \(x_1,\ldots,x_n\in I\), \(y_1,\ldots,y_n\in J\), and \(r_1,\ldots,r_n\in\ddot S\) such that
\[
z\leq\sum_{i=1}^n r_i x_i y_i
 \leq\sum_{i=1}^n x_i y_i;
\]
the second inequality uses \(r_i\leq1\). Hence
\[
f(z)\leq\bigvee_{i=1}^n f(x_i)*f(y_i)
 \leq\Phi_f(I)*\Phi_f(J).
\]
Taking the join over all $z$ gives
\[
\Phi_f(I\odot_kJ)\leq\Phi_f(I)*\Phi_f(J).
\]
Conversely, every \(xy\) with \(x\in I\) and \(y\in J\) belongs to \(I\odot_kJ\), and quantale distributivity gives
\[
\Phi_f(I)*\Phi_f(J)
 =\bigvee_{x\in I,\,y\in J}f(xy)
 \leq\Phi_f(I\odot_kJ).
\]
Also, \(\langle0\rangle_k=(0)\) and \(\langle1\rangle_k=\ddot S\), so \(\Phi_f\) preserves bottom and the multiplicative unit. Thus \(\Phi_f\) is a quantale homomorphism, and \eqref{eq:Phi-principal} says
\(\Phi_f\eta_{\ddot S}=f\).

This extension is unique because
\[
I=\mathop{\bigvee{}^k}_{x\in I}\langle x\rangle_k.
\]
Conversely, for \(\Psi\colon\Idk(\ddot S)\to Q\) in \(\QuantTop\), define
\[
f_\Psi(x)=\Psi(\langle x\rangle_k).
\]
Lemma~\ref{lem:principal-k-ideals}, together with
\[
\langle0\rangle_k=(0),\qquad \langle1\rangle_k=\ddot S,
\]
and preservation by \(\Psi\) of arbitrary joins, multiplication, bottom, and unit, shows that \(f_\Psi\) is a unital semiring homomorphism. Equation~\eqref{eq:Phi-principal} gives \(f_{\Phi_f}=f\), while
\[
\Phi_{f_\Psi}(I)
 =\bigvee_{x\in I}\Psi(\langle x\rangle_k)
 =\Psi\left(\mathop{\bigvee{}^k}_{x\in I}\langle x\rangle_k\right)
 =\Psi(I).
\]
The constructions are inverse. To record naturality explicitly, let \(u\colon\ddot S'\to\ddot S\) be a morphism in \(\CIdRig\) and let \(v\colon Q\to Q'\) be a morphism in \(\QuantTop\). For every $k$-ideal $I$,
\begin{align*}
(\Phi_f\circ\Idk(u))(I)
 &=\Phi_f\left(\mathop{\bigvee{}^k}_{x\in I}\langle u(x)\rangle_k\right)\\
 &=\bigvee_{x\in I} f(u(x))
  =\Phi_{f\circ u}(I),\\
(v\circ\Phi_f)(I)
 &=v\left(\bigvee_{x\in I}f(x)\right)
  =\bigvee_{x\in I}v(f(x))
  =\Phi_{v\circ f}(I).
\end{align*}
Thus the hom-set bijection is natural in both variables.
\end{proof}

\begin{corollary}\label{cor:unit-counit}
The unit of the adjunction is
\[
\eta_{\ddot S}(x)=\langle x\rangle_k,
\]
and the counit at \(Q\in\QuantTop\) is
\[
\varepsilon_Q\colon\Idk(\mathcal UQ)\longrightarrow Q,
\qquad
\varepsilon_Q(I)=\bigvee_{x\in I}x.
\]
\end{corollary}

\begin{proof}
The unit was identified above. The counit is the extension \(\Phi_{1_{\mathcal UQ}}\) corresponding to the identity morphism under the hom-set bijection of Theorem~\ref{thm:k-ideal-quantale-adjunction}.
\end{proof}

\subsection{The induced monad and comparison with ideal completion}\label{subsec:induced-monad}

The adjunction determines a monad \(\mathbb T=\mathcal U\Idk\) on \(\CIdRig\). We compute its unit and multiplication and then identify its precise overlap with the classical ideal-lattice monad. This comparison separates the genuinely multiplicative complete-idealic setting from the frame case in which multiplication is meet.

For \(\ddot S\in\CIdRig\), idealicity gives
\[
\langle x\rangle_k=\mathord\downarrow x
 =\{y\in\ddot S\mid y\leq x\}.
\]
For a subset \(A\subseteq\ddot S\), put
\[
K_{\ddot S}(A)
 =\mathop{\bigvee{}^k}_{a\in A}\mathord\downarrow a.
\]

\begin{lemma}\label{lem:k-generated-supremum}
For every \(A\subseteq\ddot S\),
\[
\bigvee_{x\in K_{\ddot S}(A)}x=\bigvee_{a\in A}a.
\]
Moreover, for every \(x\in\ddot S\),
\[
(\mathord\downarrow x)\odot_k K_{\ddot S}(A)
 =K_{\ddot S}(\{xa\mid a\in A\}).
\]
\end{lemma}

\begin{proof}
The inclusion \(A\subseteq K_{\ddot S}(A)\) gives one inequality. Conversely, if \(z\in K_{\ddot S}(A)\), then for finitely many \(a_i\in A\) and \(r_i\in\ddot S\),
\[
z\leq\sum_i r_i a_i\leq\sum_i a_i\leq\bigvee_{a\in A}a,
\]
because \(r_i\leq1\). This proves the first formula.

The product on the left is a $k$-ideal containing every $xa$, so it contains the right-hand side. Conversely, let \(u\leq x\) and \(v\in K_{\ddot S}(A)\). Choose finitely many \(a_i\in A\) with
\(v\leq\sum_i a_i\). Finite distributivity gives
\[
uv\leq xv\leq\sum_i xa_i,
\]
so every generator of the ordinary product lies in
\(K_{\ddot S}(\{xa\mid a\in A\})\). Since the latter is a $k$-ideal, it contains the $k$-closure of that product.
\end{proof}

The monad has
\[
\mathbb T(\ddot S)=\Idk(\ddot S),
\qquad
\eta_{\ddot S}(x)=\mathord\downarrow x,
\]
and, for a $k$-ideal \(\mathcal J\) of the complete idealic semiring \(\Idk(\ddot S)\),
\[
\mu_{\ddot S}(\mathcal J)
 =\mathop{\bigvee{}^k}_{I\in\mathcal J}I.
\]
The last formula is the underlying map of the counit at \(\Idk(\ddot S)\).

Let \(\DLatFrm\) be the full subcategory of \(\DLat\) whose objects are frames; its morphisms are bounded lattice homomorphisms and are not required to preserve arbitrary joins. There is a fully faithful functor
\[
\mathcal E\colon\DLatFrm\longrightarrow\CIdRig,
\qquad
F\longmapsto(F,\vee,\wedge,0,1),
\]
which regards meet as multiplication. Let \(\mathbb I\) denote the ideal-lattice monad on \(\DLat\).

\begin{proposition}\label{prop:monad-frame-comparison}
The endofunctors \(\mathbb I\) and \(\mathbb T\) preserve the full subcategories \(\DLatFrm\) and \(\mathcal E(\DLatFrm)\), respectively. The functor \(\mathcal E\) intertwines their endofunctors, units, and multiplications by canonical natural isomorphisms. Equivalently, after identifying \(\DLatFrm\) with its image under \(\mathcal E\), the restriction of \(\mathbb T\) is the ideal-lattice monad \(\mathbb I\).

More explicitly, for every frame $F$:
\begin{enumerate}
\item the $k$-ideals of the semiring \(\mathcal E(F)\) are exactly the lattice ideals of $F$;
\item for lattice ideals $I,J\subseteq F$,
\[
I\odot_kJ=I\cap J;
\]
\item on objects and morphisms, \(\Idk\mathcal E\) is canonically \(\mathcal E\mathsf{Idl}\);
\item both units send \(x\) to \(\mathord\downarrow x\), and both multiplications send an ideal \(\mathcal J\) of lattice ideals to
\[
\bigcup_{I\in\mathcal J}I.
\]
\end{enumerate}
\end{proposition}

\begin{proof}
An ideal of the semiring \((F,\vee,\wedge,0,1)\) is closed under finite joins and under meets with arbitrary elements. Hence it is a lower set closed under finite joins, that is, a lattice ideal. Conversely, a lattice ideal is a semiring ideal. Since a subset of an additively idempotent semiring is a $k$-ideal exactly when it is downward closed, the two notions coincide.

For lattice ideals $I$ and $J$, every generator $x\wedge y$ of the ordinary semiring product lies in $I\cap J$, so
\(I\odot_kJ\subseteq I\cap J\). Conversely, if $z\in I\cap J$, then
\(z=z\wedge z\) is itself a product generator. Therefore
\(I\odot_kJ=I\cap J\).

If \(h\colon F\to G\) is a bounded lattice homomorphism, then $k$-ideal extension along \(\mathcal E(h)\) is the lattice ideal generated by $h[I]$. Thus the object and morphism parts agree with the ideal-lattice functor, naturally in $F$. The two units are the principal-ideal maps \(x\mapsto\mathord\downarrow x\).

Finally, let \(\mathcal J\) be a lattice ideal of \(\mathsf{Idl}(F)\). Its union is a lower set in $F$. If $x\in I$ and $y\in J$ for $I,J\in\mathcal J$, then
\(I\vee J\in\mathcal J\) and $x\vee y\in I\vee J$; hence the union is closed under finite joins. It is therefore a lattice ideal, and it is the join of the members of \(\mathcal J\). Consequently both monad multiplications are the union map. These identifications are natural and compatible with the monad laws.
\end{proof}

\begin{remark}\label{rem:monad-comparison-scope}
Proposition~\ref{prop:monad-frame-comparison} gives the exact overlap with the ideal-lattice monad studied in \cite{Johnstone,Razafindrakoto25}. It does not identify the two constructions globally. The monad \(\mathbb I\) is based on bounded distributive lattices and uses meet only after passing to frame objects; \(\mathbb T\) is based on complete idealic semirings and retains their given multiplication through the $k$-ideal product. General ideal-completion adjunctions from idempotent or ordered semirings to quantales appear in \cite{NishizawaFurusawa14,Fujii23}; the contribution here is the explicit subtractive monad, its frame comparison, and its Eilenberg--Moore description within the radical-ideal framework of this paper.
\end{remark}

\subsection{Eilenberg--Moore algebras and monadicity}\label{subsec:EM-algebras}

We now characterize the algebras of \(\mathbb T\) intrinsically and prove that the forgetful functor from integral commutative quantales is monadic. The missing arbitrary distributivity of a complete idealic semiring is exactly the structure encoded by a \(\mathbb T\)-algebra.

\begin{theorem}\label{thm:EM-quantales}
Let \(\mathbb T=\mathcal U\Idk\). The comparison functor
\[
\mathcal K\colon\QuantTop\longrightarrow\CIdRig^{\mathbb T},
\qquad
Q\longmapsto
\left(\mathcal UQ,
 I\longmapsto\bigvee_{x\in I}x\right),
\]
is an equivalence. Hence \(\mathcal U\) is monadic.

More explicitly, a complete idealic semiring \(\ddot S\) admits a \(\mathbb T\)-algebra structure if and only if its multiplication distributes over arbitrary joins, that is, if and only if \(\ddot S\) is the underlying complete idealic semiring of an object of \(\QuantTop\). When it exists, the algebra structure is unique and is given by
\[
\alpha_{\ddot S}(I)=\bigvee_{x\in I}x.
\]
A morphism of \(\mathbb T\)-algebras is precisely a quantale homomorphism.
\end{theorem}

\begin{proof}
Let \((\ddot S,\alpha)\) be a \(\mathbb T\)-algebra. Since a homomorphism of additively idempotent semirings is monotone, for every $k$-ideal $I$ and every \(x\in I\),
\[
x=\alpha(\mathord\downarrow x)\leq\alpha(I).
\]
Thus \(\bigvee I\leq\alpha(I)\). Conversely,
\(I\subseteq\mathord\downarrow(\bigvee I)\), so the unit law gives
\[
\alpha(I)\leq
\alpha(\mathord\downarrow(\bigvee I))
 =\bigvee I.
\]
Hence every algebra structure is forced to satisfy
\(\alpha(I)=\bigvee I\).

Let \(A\subseteq\ddot S\) and \(x\in\ddot S\). Using multiplicativity of $\alpha$ and Lemma~\ref{lem:k-generated-supremum}, we obtain
\begin{align*}
x\left(\bigvee_{a\in A}a\right)
 &=\alpha(\mathord\downarrow x)\,
   \alpha(K_{\ddot S}(A))\\
 &=\alpha\bigl((\mathord\downarrow x)\odot_kK_{\ddot S}(A)\bigr)\\
 &=\alpha\bigl(K_{\ddot S}(\{xa\mid a\in A\})\bigr)\\
 &=\bigvee_{a\in A}xa.
\end{align*}
Commutativity gives distributivity in both variables. Thus \(\ddot S\) is an integral commutative quantale.

Conversely, if $Q$ is such a quantale, the counit
\[
\alpha_Q(I)=\bigvee_{x\in I}x
\]
is a morphism in \(\CIdRig\). It is the underlying map of the counit at $Q$, so the comparison construction gives
\[
\alpha_Q\circ\eta_{\mathcal UQ}=1_{\mathcal UQ},
\qquad
\alpha_Q\circ\mathbb T(\alpha_Q)
 =\alpha_Q\circ\mu_{\mathcal UQ}.
\]
The first identity is the counit--unit triangular identity, and the second follows from naturality of the counit together with the definition
\(\mu=\mathcal U\varepsilon_{\Idk(-)}\). Hence \(\alpha_Q\) is a \(\mathbb T\)-algebra. The preceding paragraph shows that this is the only possible algebra structure on its underlying complete idealic semiring.

It remains to identify morphisms. Let
\(h\colon(\ddot S,\alpha)\to(\ddot T,\beta)\) be a \(\mathbb T\)-algebra morphism. For \(A\subseteq\ddot S\), preservation of $k$-ideal joins and the principal formula of Proposition~\ref{prop:Idk-functor} give
\begin{align*}
\mathbb T(h)(K_{\ddot S}(A))
 &=\mathbb T(h)\left(
   \mathop{\bigvee{}^k}_{a\in A}\mathord\downarrow a
   \right)\\
 &=\mathop{\bigvee{}^k}_{a\in A}\mathord\downarrow h(a)
 =K_{\ddot T}(h[A]).
\end{align*}
Therefore Lemma~\ref{lem:k-generated-supremum} and the algebra-morphism equation yield
\begin{align*}
h\left(\bigvee A\right)
 &=h\alpha(K_{\ddot S}(A))\\
 &=\beta\,\mathbb T(h)(K_{\ddot S}(A))\\
 &=\beta(K_{\ddot T}(h[A]))
 =\bigvee_{a\in A}h(a).
\end{align*}
Hence $h$ preserves arbitrary joins and is a quantale homomorphism. Conversely, an arbitrary-join-preserving semiring homomorphism satisfies
\[
h\left(\bigvee_{x\in I}x\right)
 =\bigvee_{x\in I}h(x)
 =\bigvee\mathbb T(h)(I),
\]
so it is a \(\mathbb T\)-algebra morphism. The comparison functor is therefore fully faithful and essentially surjective. This is the Eilenberg--Moore description of the adjunction \cite{EM65}.
\end{proof}

\begin{corollary}\label{cor:quantale-reflection-test}
For \(\ddot S\in\CIdRig\), the following are equivalent:
\begin{enumerate}
\item \(\ddot S\) is an integral commutative quantale;
\item the counit-type map
\[
\alpha_{\ddot S}\colon\Idk(\ddot S)\longrightarrow\ddot S,
\qquad
I\longmapsto\bigvee I,
\]
is a unital semiring homomorphism;
\item \(\ddot S\) carries a \(\mathbb T\)-algebra structure.
\end{enumerate}
In that case \(\alpha_{\ddot S}\) is the unique algebra structure.
\end{corollary}

\begin{proof}
Theorem~\ref{thm:EM-quantales} proves the equivalence of (1) and (3) and uniqueness. If (1) holds, the displayed map is the counit and hence a semiring homomorphism. If (2) holds, the calculation in the proof of Theorem~\ref{thm:EM-quantales} forces arbitrary distributivity, so (1) follows.
\end{proof}
\vskip 1cm

\noindent\textbf{Declarations.}

\vskip.25cm

\noindent\textbf{Funding.}
The first author was supported by the University Grants Commission (India) through a Senior Research Fellowship (ID: 211610013222/Joint CSIR--UGC NET June 2021). The first and third authors are also thankful to the DST-FIST Purse Programme (Programme no. SR/FST/MS-II/2021/101(C)) of the Department of Mathematics, Jadavpur University, Kolkata, India.

\smallskip
\noindent\textbf{Competing interests.}
The authors declare that they have no competing financial or non-financial interests that are directly or indirectly related to this work.

\smallskip
\noindent\textbf{Data availability.}
No datasets were generated or analyzed during the present study.


\begin{thebibliography}{99}
\sloppy

\bibitem{Balmer07}
P.~Balmer,
\emph{Supports and filtrations in algebraic geometry and modular representation theory},
Amer. J. Math. \textbf{129} (2007), 1227--1250.
\url{https://doi.org/10.1353/ajm.2007.0030}

\bibitem{Banaschewski}
B.~Banaschewski,
\emph{Radical ideals and coherent frames},
Comment. Math. Univ. Carolin. \textbf{37} (1996), no.~2, 349--370.

\bibitem{BiswasFilomat}
P.~Biswas, S.~Bag, and S.~K.~Sardar,
\emph{Some new class of ideals in semirings and their applications},
Filomat \textbf{38} (2024), no.~29, 10223--10237.
\url{https://doi.org/10.2298/FIL2429223B}

\bibitem{Measure}
P.~Biswas, S.~Bag, and S.~K.~Sardar,
\emph{On the positive cone of rings of measurable functions},
Filomat \textbf{39} (2025), no.~22, 7877--7891.
\url{https://doi.org/10.2298/FIL2522877B}

\bibitem{Che93}
V.~V.~Chermnykh,
\emph{Sheaf representations of semirings},
Russian Math. Surveys \textbf{48} (1993), no.~5, 169--170.
\url{https://doi.org/10.1070/RM1993v048n05ABEH001078}

\bibitem{Chermnykh12}
V.~V.~Chermnykh,
\emph{Functional representations of semirings},
J. Math. Sci. \textbf{187} (2012), 187--267.
\url{https://doi.org/10.1007/s10958-012-1062-2}

\bibitem{SpectralBook}
M.~Dickmann, N.~Schwartz, and M.~Tressl,
\emph{Spectral Spaces},
New Mathematical Monographs, vol.~35, Cambridge Univ. Press, Cambridge, 2019.
\url{https://doi.org/10.1017/9781316543870}

\bibitem{Dube18}
T.~Dube,
\emph{Some connections between frames of radical ideals and frames of $z$-ideals},
Algebra Universalis \textbf{79} (2018), Paper No.~7, 18 pp.
\url{https://doi.org/10.1007/s00012-018-0494-z}

\bibitem{EM65}
S.~Eilenberg and J.~C.~Moore,
\emph{Adjoint functors and triples},
Illinois J. Math. \textbf{9} (1965), 381--398.
\url{https://doi.org/10.1215/ijm/1256068141}

\bibitem{Fuchs}
L.~Fuchs,
\emph{Partially Ordered Algebraic Systems},
Pergamon Press, Oxford; Addison--Wesley, Reading, MA, 1963.

\bibitem{Fujii23}
S.~Fujii,
\emph{Ordered semirings and subadditive morphisms},
preprint, arXiv:2311.03862 (2023).
\url{https://doi.org/10.48550/arXiv.2311.03862}

\bibitem{Georgescu21}
G.~Georgescu, L.~Kwuida, and C.~Mure\c{s}an,
\emph{Functorial properties of the reticulation of a universal algebra},
J. Appl. Logics \textbf{8} (2021), no.~5, 1123--1168.

\bibitem{Golan}
J.~S.~Golan,
\emph{Semirings and Their Applications},
Kluwer Academic Publishers, Dordrecht, 1999.
\url{https://doi.org/10.1007/978-94-015-9333-5}

\bibitem{AG24}
A.~Goswami and T.~Dube,
\emph{Some aspects of $k$-ideals of semirings},
Rend. Circ. Mat. Palermo (2) \textbf{73} (2024), no.~8, 3105--3117.
\url{https://doi.org/10.1007/s12215-024-01097-1}

\bibitem{GrothendieckFib}
A.~Grothendieck,
\emph{Cat\'egories fibr\'ees et descente},
in: Rev\^etements \'etales et groupe fondamental (SGA~1), Expos\'e~VI,
Lecture Notes in Math. \textbf{224}, Springer, Berlin, 1971, 145--194.

\bibitem{Hen58}
M.~Henriksen,
\emph{Ideals in semirings with commutative addition},
Notices Amer. Math. Soc. \textbf{5} (1958), 321, Abstract 542-183.

\bibitem{Hochster}
M.~Hochster,
\emph{Prime ideal structure in commutative rings},
Trans. Amer. Math. Soc. \textbf{142} (1969), 43--60.
\url{https://doi.org/10.1090/S0002-9947-1969-0251026-X}

\bibitem{Johnstone}
P.~T.~Johnstone,
\emph{Stone Spaces},
Cambridge Studies in Advanced Mathematics, vol.~3,
Cambridge Univ. Press, Cambridge, 1982.

\bibitem{JoyalSupport}
A.~Joyal,
\emph{Les th\'eor\`emes de Chevalley--Tarski et remarques sur l'alg\`ebre constructive},
Cah. Topol. G\'eom. Diff\'er. Cat\'eg. \textbf{16} (1975), no.~3, 256--258.

\bibitem{Jun17}
J.~Jun,
\emph{\v{C}ech cohomology of semiring schemes},
J. Algebra \textbf{483} (2017), 306--328.
\url{https://doi.org/10.1016/j.jalgebra.2017.04.001}

\bibitem{Ray22}
J.~Jun, S.~Ray, and J.~Tolliver,
\emph{Lattices, spectral spaces, and closure operations on idempotent semirings},
J. Algebra \textbf{594} (2022), 313--363.
\url{https://doi.org/10.1016/j.jalgebra.2021.12.007}

\bibitem{Kock}
J.~Kock and W.~Pitsch,
\emph{Hochster duality in derived categories and point-free reconstruction of schemes},
Trans. Amer. Math. Soc. \textbf{369} (2017), no.~1, 223--261.
\url{https://doi.org/10.1090/tran/6773}

\bibitem{Krause24}
H.~Krause,
\emph{An analogue of Stone duality via support},
J. Pure Appl. Algebra \textbf{228} (2024), no.~6, Paper No.~107621, 9 pp.
\url{https://doi.org/10.1016/j.jpaa.2024.107621}

\bibitem{Lescot}
P.~Lescot,
\emph{Prime and primary ideals in semirings},
Osaka J. Math. \textbf{52} (2015), no.~3, 721--736.
\url{https://doi.org/10.18910/57677}

\bibitem{MacLane}
S.~Mac~Lane,
\emph{Categories for the Working Mathematician},
2nd ed., Graduate Texts in Mathematics, vol.~5,
Springer, New York, 1998.
\url{https://doi.org/10.1007/978-1-4757-4721-8}

\bibitem{Man19}
G.~Manuell,
\emph{Quantalic Spectra of Semirings},
Ph.D. thesis, University of Edinburgh, 2019.

\bibitem{Manuell}
G.~Manuell,
\emph{The spectrum of a localic semiring},
Math. Proc. Cambridge Philos. Soc. \textbf{173} (2022), no.~3, 647--668.
\url{https://doi.org/10.1017/S0305004122000068}

\bibitem{Martinez}
J.~Mart\'inez,
\emph{An innocent theorem of Banaschewski, applied to an unsuspecting theorem of De Marco, and the aftermath thereof},
Forum Math. \textbf{25} (2013), no.~3, 565--596.
\url{https://doi.org/10.1515/form.2011.129}

\bibitem{NishizawaFurusawa14}
K.~Nishizawa and H.~Furusawa,
\emph{A sufficient condition for liftable adjunctions between Eilenberg--Moore categories},
in: P.~H\"ofner, P.~Jipsen, W.~Kahl, and M.~E.~M\"uller (eds.),
\emph{Relational and Algebraic Methods in Computer Science},
Lecture Notes in Comput. Sci., vol.~8428, Springer, Cham, 2014, 261--276.
\url{https://doi.org/10.1007/978-3-319-06251-8_16}

\bibitem{Picado}
J.~Picado and A.~Pultr,
\emph{Frames and Locales: Topology without Points},
Frontiers in Mathematics, Birkh\"auser/Springer, Basel, 2012.
\url{https://doi.org/10.1007/978-3-0348-0154-6}

\bibitem{Razafindrakoto25}
A.~Razafindrakoto,
\emph{Monadic aspects of the ideal lattice functor on the category of distributive lattices},
Appl. Categ. Structures \textbf{33} (2025), Paper No.~26.
\url{https://doi.org/10.1007/s10485-025-09811-5}

\bibitem{Ros90}
K.~I.~Rosenthal,
\emph{Quantales and Their Applications},
Pitman Research Notes in Mathematics Series, vol.~234,
Longman Scientific \& Technical, Harlow, 1990.

\bibitem{Sen}
M.~K.~Sen and M.~R.~Adhikari,
\emph{On maximal $k$-ideals of semirings},
Proc. Amer. Math. Soc. \textbf{118} (1993), no.~3, 699--703.
\url{https://doi.org/10.2307/2160106}

\bibitem{Simmons}
H.~Simmons,
\emph{Reticulated rings},
J. Algebra \textbf{66} (1980), 169--192.
\url{https://doi.org/10.1016/0021-8693(80)90118-0}

\bibitem{Smith}
F.~A.~Smith,
\emph{A structure theory for a class of lattice-ordered semirings},
Fund. Math. \textbf{59} (1966), 49--64.

\bibitem{StacksSpectral}
The Stacks Project Authors,
\emph{Spectral spaces}, Lemma~5.23.13, Tag~09XW,
\url{https://stacks.math.columbia.edu/tag/09XW}.

\bibitem{Takagi}
S.~Takagi,
\emph{Construction of schemes over $\mathds{F}_1$, and over idempotent semirings: towards tropical geometry},
preprint, arXiv:1009.0121 (2010).
\url{https://arxiv.org/abs/1009.0121}

\end{thebibliography}
\end{document}